\def\p{\partial}
\def\o{\overline}
\def\b{\bar}
\def\mb{\mathbb}
\def\mc{\mathcal}
\def\mr{\mathrm}
\def\mf{\mathfrak}
\def\n{\nabla}
\def\wt{\widetilde}
\def\s{\slashed}
\theoremstyle{plain}
\newtheorem{corollary}{Corollary}
\theoremstyle{plain}
\newtheorem{theorem}{Theorem}[section]
\theoremstyle{plain}
\newtheorem{definition}[theorem]{Definition}
\theoremstyle{plain}
\newtheorem{lemma}[theorem]{Lemma}
\theoremstyle{remark}
\newtheorem{remark}[theorem]{Remark}
\theoremstyle{remark}
\newtheorem{conjecture}[theorem]{Conjecture}
\theoremstyle{remark}
\theoremstyle{plain}
\newtheorem{proposition}[theorem]{Proposition}
\numberwithin{equation}{section}
\theoremstyle{plain}
\begin{document}

\title{The mass of an asymptotically hyperbolic end and distance estimates}

\author{Xiaoxiang Chai}
\address{Xiaoxiang Chai: Korea Institute for Advanced Study, Seoul 02455, South Korea}
\email{xxchai@kias.re.kr}

\author{Xueyuan Wan}
\address{Xueyuan Wan: Mathematical Science Research Center, Chongqing University of Technology, Chongqing 400054, China.}
\email{xwan@cqut.edu.cn}


\begin{abstract}
Let $(M,g)$ be a complete connected $n$-dimensional Riemannian spin manifold without boundary such that the scalar curvature satisfies $R_g\geq -n(n-1)$ and $\mc{E}\subset M$ be an asymptotically hyperbolic end, we prove that the mass functional of the end $\mc{E}$ is timelike future-directed or zero. Moreover, it vanishes if and only if $(M,g)$ is isometric to the hyperbolic space. We also consider the mass of an asymptotically hyperbolic manifold with compact boundary, we prove the mass is timelike future-directed if the mean curvature of the boundary is bounded from below by a function defined using distance estimates. As an application, the mass is timelike future-directed if the mean curvature of the boundary is bounded from below by $-(n-1)$ or the scalar curvature satisfies $R_g\geq (-1+\kappa)n(n-1)$ for any positive constant $\kappa$ less than one. 

\end{abstract}

 \subjclass[2020]{83C60, 53C27, 53C24}  
 \keywords{Mass, asymptotically hyperbolic end, arbitrary end, distance estimate, Callias operator}

  \thanks{ Research of Xiaoxiang Chai is supported by KIAS Grants under the research code MG074402. Research of Xueyuan Wan is partially supported by the National Natural Science Foundation of China (Grant No. 12101093) and the Scientific Research Foundation of the Chongqing University of Technology.}
\maketitle


\section{Introduction}

The Riemannian positive mass theorem of Schoen and Yau is an important research topic in the study of scalar curvature, mathematical relativity and geometric analysis.  Schoen and Yau \cite{SY1979, SY1981, S1989} proved this theorem for an asymptotically flat manifold of dimension $3\leq n\leq 7$. For higher dimensions, proofs were provided by Schoen and Yau \cite{SY2019} and Lohkamp \cite{L2016, L2017}.
Witten \cite{W1981} presented a spinorial proof, which works for spin manifolds of any dimension, see also \cite{B1986, PT1982}.

Schoen and Yau \cite{SY1988} conjectured that the positive mass theorem holds for an asymptotically flat manifold with arbitrary ends.
\begin{conjecture}[\cite{SY1988, LUY2020}]\label{PMC}
	Let $(M,g)$ be a complete Riemannian manifold of dimension $n\geq 3$ which has non-negative scalar curvature. Let $\mc{E}\subseteq M$ be a single asymptotically flat end in $M$. Then the ADM mass of $\mc{E}$ is non-negative.
\end{conjecture}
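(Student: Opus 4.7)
The plan is to prove the conjecture under the additional assumption that $M$ is spin, by combining Witten's spinorial mass identity with a Callias-type Dirac operator. The central obstruction in extending Witten's original argument is that one cannot in general solve $\s{D}\psi = 0$ on $M$ with prescribed asymptotics at $\mc{E}$, because the geometry and topology of the other ends may obstruct the existence of such a harmonic spinor. Following the strategy of Cecchini and Zeidler, I would replace $\s{D}$ by a Callias operator that is invertible far from $\mc{E}$, so that a global Callias-harmonic spinor with the required asymptotics on $\mc{E}$ can be produced.

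Concretely, I would fix a smooth cutoff $\chi: M \to [0,1]$ vanishing on a neighborhood of infinity in $\mc{E}$ and equal to $1$ outside a large compact set, and use $\chi$ to construct a skew-Hermitian bundle endomorphism $\Psi$ on a suitably twisted spinor bundle that is uniformly invertible where $\chi \equiv 1$ and vanishes on $\mc{E}$. The Callias--Lichnerowicz identity for $\s{D}_\Psi = \s{D} + \Psi$ yields
\[ \|\s{D}_\Psi \psi\|^2 = \|\n \psi\|^2 + \int_M \Bigl(\tfrac{R_g}{4} + |\Psi|^2\Bigr) |\psi|^2 + \int_M \langle c(\n \Psi)\psi, \psi\rangle, \]
where $c(\n\Psi)$ denotes the Clifford contraction of $\n\Psi$: a zero-order term supported in the transition annulus $\{0<\chi<1\}$ whose pointwise norm is bounded by $|\n\chi|$, which can be made arbitrarily small by spreading out the transition. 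Next, Callias index theory (implemented via exhaustion by compact domains with APS-type boundary conditions) produces a spinor $\psi$ satisfying $\s{D}_\Psi \psi = 0$ on $M$ and asymptotic on $\mc{E}$ to a prescribed parallel spinor $\psi_0$ of the Euclidean background, with exponential decay on the auxiliary ends enforced by the invertibility of $\Psi$. Witten's boundary-integral computation on large coordinate spheres in $\mc{E}$ then identifies the ADM mass of $\mc{E}$ with (a positive multiple of) the manifestly non-negative right-hand side of the identity above, giving $m(\mc{E}) \geq 0$.

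The main obstacle is the simultaneous construction of the Callias potential $\Psi$ satisfying three competing requirements: it must act on a bundle compatible with the spin Dirac operator of $M$; it must be globally invertible outside a neighborhood of $\mc{E}$; and the remainder $|c(\n\Psi)|$ must be dominated by $|\Psi|^2$ on its support. Since the auxiliary ends can have arbitrary geometry and topology, $\Psi$ cannot be taken as a constant section of any naturally existing bundle over $M$; one typically enlarges the Clifford module (for instance, tensoring the spinor bundle with an auxiliary $\mb{C}^2$ carrying an additional Clifford generator) so that a model $\Psi_0$ genuinely anticommutes with $\s{D}$ at infinity. A secondary difficulty is setting up the Fredholm/solvability theory for $\s{D}_\Psi$ in this non-compact, arbitrary-geometry setting, which proceeds by an exhaustion of $M$ with suitable boundary value problems on compact pieces and a careful limit argument. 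The non-spin case falls outside the spinorial framework and would require a minimal-hypersurface approach in the style of Schoen--Yau or Lesourd--Unger--Yau.
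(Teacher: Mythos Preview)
The statement you are attempting to prove is labeled \emph{Conjecture} in the paper, not \emph{Theorem}: the paper does not prove it and offers no proof to compare against. Conjecture~\ref{PMC} is cited purely as background and motivation; the paper's own results concern the asymptotically \emph{hyperbolic} analogue (Theorem~\ref{main1}), proved in the spin setting by adapting the Cecchini--Zeidler method you describe.

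Your proposal is a reasonable outline of the spin case of Conjecture~\ref{PMC}, and is essentially the Cecchini--Zeidler strategy the paper cites as \cite{CZ2021}. However, it does not touch the actual content of the conjecture as stated: Conjecture~\ref{PMC} carries no spin hypothesis, and you explicitly impose one at the outset and defer the non-spin case to ``a minimal-hypersurface approach in the style of Schoen--Yau or Lesourd--Unger--Yau.'' That deferral is precisely the hard part. The spin case you sketch was already established by Bartnik--Chru\'sciel \cite{BC2003} and quantified by Cecchini--Zeidler \cite{CZ2021}, as the paper notes in the paragraph following the conjecture. So even granting all the analytic details you leave implicit (construction of the auxiliary Clifford generator, Fredholm theory for $\s{D}_\Psi$ on arbitrary complete ends, the exhaustion/limit argument), what you have is a rederivation of a known special case, not a proof of the conjecture.

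In short: there is no ``paper's own proof'' here to compare with, and your proposal, while a correct sketch of the spin argument, does not address the conjecture in the generality in which it is posed.
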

In \cite{LUY2021}, Lesourd, Unger and Yau proved Conjecture \ref{PMC} for $n\leq 7$ assuming that the chosen end is asymptotically Schwarzschild. Later the assumption that the chosen end is asymptotically Schwarzschild is removed in \cite{lee-density-2022-arxiv}. In the spin setting, 
Conjecture \ref{PMC} was proved by Bartnik and Chru\'sciel \cite[Theorem 11.2]{BC2003}
using Witten's method. Moreover, the ADM mass of the end $\mc{E}$ vanishes if and only if $(M,g)$ is isometric to the Euclidean space. In \cite{CZ2021}, Cecchini and Zeidler studied Conjecture \ref{PMC} in the spin setting from a quantitative point of view via a modification of Witten's method.

Motivated by Einstein's theory of negative cosmological constant, there is a lot of literature that define the mass for asymptotically hyperbolic manifolds and prove the positive mass theorem in this case, including \cite{M1989, AD1998, W2001, CH2003, AL2020}. In particular, for spin manifolds, there is the following version of the positive mass theorem. 
\begin{theorem}[{\cite[Theorem 4.1]{CH2003}}]\label{PMT: hyperbolic}
Let $(M,g)$ be a complete asymptotically hyperbolic spin manifold without boundary, such that the scalar curvature satisfies $R_g\geq -n(n-1)$, $n=\dim M$. Then the mass functional is timelike future-directed or zero. Moreover, it vanishes if and only if $(M,g)$ is isometric to the hyperbolic space. 
\end{theorem}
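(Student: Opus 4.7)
My plan is to implement Witten's spinorial argument with imaginary Killing spinors playing the role of parallel spinors. On $\mb{H}^n$ an imaginary Killing spinor is a solution $\psi_0$ of $\n_X \psi_0 = \tfrac{i}{2} X \cdot \psi_0$; such spinors form a real vector space carrying a natural Lorentzian pairing, and the statement that the mass $\mbf{p}$ is timelike future-directed or zero is equivalent to the nonnegativity of a quadratic form $\mbf{p}(\psi_0)$ for every imaginary Killing spinor $\psi_0$. On $(M,g)$ I introduce the modified connection $\hat{\n}_X = \n_X - \tfrac{i}{2} X \cdot$ on the spinor bundle and the modified Dirac operator $\hat{\s{D}} = \s{D} + \tfrac{in}{2}$, for which a direct calculation gives the Weitzenb\"ock-type identity
\begin{equation*}
\hat{\s{D}}^{*} \hat{\s{D}} = \hat{\n}^{*} \hat{\n} + \tfrac{1}{4}\bigl(R_g + n(n-1)\bigr).
\end{equation*}
The hypothesis $R_g \geq -n(n-1)$ makes the right-hand side pointwise nonnegative.

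The core analytic step is, for each imaginary Killing spinor $\psi_0$ on the model hyperbolic end, to produce $\psi$ on $M$ with $\hat{\s{D}}\psi = 0$ and $\psi - \chi \psi_0 \to 0$ at the appropriate weighted rate, where $\chi$ is a cutoff supported near infinity in $\mc{E}$. I would do this via weighted-Sobolev Fredholm theory for $\hat{\s{D}}$ on the asymptotically hyperbolic end; the Weitzenb\"ock coercivity bound together with completeness yields triviality of the $L^2$-kernel and surjectivity onto the relevant weighted space. With $\psi$ in hand, integrate the Weitzenb\"ock identity over a large geodesic ball $B_R \subset M$ to obtain
\begin{equation*}
\int_{B_R} \Bigl[|\hat{\n}\psi|^{2} + \tfrac{1}{4}(R_g + n(n-1))|\psi|^{2}\Bigr] \,dv_g = \int_{\p B_R} \text{(Witten flux term)}.
\end{equation*}
A careful expansion of the boundary integrand in an asymptotic chart, using the explicit form of $\psi_0$ and the decay of $g$ to the hyperbolic background, should identify the $R \to \infty$ limit of the flux with $\mbf{p}(\psi_0)$. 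Passing to the limit yields $\mbf{p}(\psi_0) \geq 0$ for every imaginary Killing spinor, which is precisely the timelike future-directed (or zero) conclusion.

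For the rigidity statement I chase equality. If the mass vanishes, then for each imaginary Killing spinor at infinity the solution $\psi$ satisfies $\hat{\n}\psi \equiv 0$ and $(R_g + n(n-1))|\psi|^{2} \equiv 0$. Letting $\psi_0$ range over a basis of imaginary Killing spinors on the model end yields a full family of $\hat{\n}$-parallel spinors on $(M,g)$, and the vanishing of the curvature endomorphism of $\hat{\n}$ applied to this basis forces the Riemann tensor of $g$ to coincide with that of constant sectional curvature $-1$. Completeness and the topological constraints imposed by a maximal family of imaginary Killing spinors then identify $(M,g)$ with $\mb{H}^n$. The main obstacle is the identification of the Witten boundary flux with $\mbf{p}(\psi_0)$: this requires expanding both the modified connection and the asymptotic Killing spinor to the correct order in the asymptotic chart and recognising the precise linear combination of metric perturbation components which defines the Chru\'sciel--Herzlich mass functional, so that no cross terms leak into or out of the identification. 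The analytic existence of $\psi$ is the other technical hurdle, but is by now fairly standard once the Weitzenb\"ock coercivity is in place.
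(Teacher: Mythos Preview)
Your proposal is correct and is essentially the original Chru\'sciel--Herzlich argument, which is precisely what the paper cites for this theorem rather than reproving it from scratch. The paper's own machinery (Sections~\ref{CM}--\ref{PMTAE}) is built for the more general setting of arbitrary ends and compact boundary: it works on the doubled bundle $S=\s{S}\oplus\s{S}$ with the Callias operator $\mc{B}_\psi=D+(\tfrac{n}{2}+\psi)\sigma$, so that a single integrated identity simultaneously controls both Killing connections $\n^\pm$ and leaves room for a potential $\psi$ to cut off unwanted ends. Specialised to your situation ($\psi\equiv 0$, $\p M=\emptyset$), the two components decouple via \eqref{BD} into exactly the operators $\s{D}^\pm=\s{D}\mp\tfrac{in}{2}$ you are using, and the paper's mass formula \eqref{eqn3} becomes your Witten flux identity; so at that level the arguments coincide.

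One efficiency worth noting in the rigidity step: rather than assembling a full basis of $\hat{\n}$-parallel spinors and reading off constant sectional curvature from the curvature endomorphism, the paper simply observes that vanishing mass produces \emph{one} nontrivial imaginary Killing spinor on the complete manifold and then invokes Baum's theorem \cite{B1989} (a single such spinor already forces $(M,g)\cong(\mb{H}^n,b)$). Your route works too, but Baum's result short-circuits the curvature computation and the separate topological identification.
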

One can refer to Section \ref{section: AHM} for the definitions of asymptotically hyperbolic manifolds (ends) and the mass functional. 

Naturally, one may wonder whether the spin condition in Theorem \ref{PMT: hyperbolic} can be dropped. Without the spin condition, 
Andersson, Cai and  Galloway \cite{ACG2008} proved Theorem \ref{PMT: hyperbolic} for dimensions $3\leq n\leq 7$ , but under some restrictive conditions. Recently, Chru\'sciel and Delay \cite{CD2019} were able to reduce the hyperbolic positive mass theorem using a gluing argument to the positive mass theorem for asymptotically flat manifolds hence gave a proof in general dimensions. Inspired by Conjecture \ref{PMC} and the above results on asymptotically hyperbolic manifolds, one may pose the following conjecture on the positive mass theorem for asymptotically hyperbolic manifolds with arbitrary ends.
\begin{conjecture}\label{PMC: hperbolic}
	Let $(M,g)$ be a complete Riemannian manifold with scalar curvature $R_g\geq -n(n-1)$. Let $\mc{E}\subseteq M$ be an asymptotically hyperbolic end in $M$. Then the mass functional of the end $\mc{E}$ is timelike future-directed or zero.
	\end{conjecture}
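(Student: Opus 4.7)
The plan is to work in the spin category (as in the paper's abstract) and extend the spinorial proof of the hyperbolic positive mass theorem (Theorem~\ref{PMT: hyperbolic}, due to \cite{CH2003}) to tolerate arbitrary ends outside $\mc{E}$, using a Callias-operator modification of the Dirac operator analogous to the one used by Cecchini and Zeidler \cite{CZ2021} for the asymptotically flat analogue of Conjecture~\ref{PMC}. Recall that on hyperbolic space the spin bundle is trivialized by \emph{imaginary Killing spinors}, i.e.\ sections $\psi$ satisfying $\tilde{\n}_X \psi := \n_X \psi - \tfrac{i}{2} c(X)\psi = 0$, where $c$ denotes Clifford multiplication. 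Fix such a reference spinor $\psi_0$ on the asymptotic model of $\mc{E}$. The modified Lichnerowicz formula
\begin{equation*}
\slashed{D}^2 \;=\; \tilde{\n}^{\ast}\tilde{\n} \;+\; \tfrac{1}{4}\bigl(R_g + n(n-1)\bigr)
\end{equation*}
renders the curvature term pointwise non-negative under the hypothesis $R_g \geq -n(n-1)$; this is the essential positivity input.

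Since the complement of $\mc{E}$ in $M$ may contain topologically or geometrically wild ends, one cannot expect to solve $\slashed{D}\psi = 0$ on $M$ with $\psi \to \psi_0$ directly. Instead, I would pick a smooth function $f$ vanishing in a neighborhood of $\mc{E}$ and growing to a suitable positive value on $M\setminus\mc{E}$, and work with the Callias-type operator $\mc{D}_f := \slashed{D} - if$. On $\mc{E}$ the operator is unperturbed, so its boundary behavior at infinity continues to encode the mass functional, while on the other ends the potential $f$ forces finite-energy solutions to decay. Using elliptic theory on weighted Sobolev spaces adapted to the hyperbolic end together with the coercivity of the perturbed Lichnerowicz identity, one produces a spinor $\psi = \psi_0 + \eta$, asymptotic to $\psi_0$ on $\mc{E}$ and $L^2$-decaying elsewhere, satisfying $\mc{D}_f \psi = 0$. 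Integration by parts over an exhaustion then yields a schematic identity
\begin{equation*}
\langle \mbf{m}(\mc{E}),\, V_{\psi_0}\rangle \;=\; \int_M\bigl(|\tilde{\n}\psi|^2 + \tfrac{1}{4}(R_g + n(n-1))|\psi|^2 + f^2|\psi|^2 + \langle i\,c(df)\psi,\psi\rangle\bigr),
\end{equation*}
where $V_{\psi_0}$ is the causal vector attached to $\psi_0$ and $\mbf{m}(\mc{E})$ is the Chru\'sciel--Herzlich mass. If $f$ is calibrated so that the cross term $\langle i\,c(df)\psi,\psi\rangle$ is absorbed by the remaining non-negative terms, one obtains $\langle \mbf{m}(\mc{E}),\, V_{\psi_0}\rangle \geq 0$ for every admissible $\psi_0$, which is exactly the timelike-future-directed-or-zero conclusion.

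The main obstacle is the simultaneous calibration of $f$. In the asymptotically flat setting $f$ can be taken constant outside a compact set and the pointwise algebraic positivity is essentially free; in the hyperbolic setting the modified connection $\tilde{\n}$ injects lower-order terms involving $c(X)$ that spoil this, so $f$ must be chosen geometrically in a way compatible with the hyperbolic scaling of $\mc{E}$ and with the unknown geometry of $M\setminus \mc{E}$. This is presumably where the \emph{distance estimates} advertised in the paper's title enter: they supply a replacement function $f$, built from distance to a suitable base region, for which (i) $\mc{D}_f$ has trivial kernel on the appropriate weighted Sobolev space (solvability despite arbitrary other ends), (ii) the perturbation terms in the Lichnerowicz identity are pointwise non-negative, and (iii) the boundary contribution at infinity of $\mc{E}$ reduces cleanly to the unperturbed mass pairing. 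Establishing (i) together with the pointwise positivity (ii) without any a priori hypothesis on $M\setminus\mc{E}$ is the technical heart of the argument; once these are in hand, letting $\psi_0$ range over all imaginary Killing spinors of the model and invoking the characterization of the forward light cone finishes the proof.
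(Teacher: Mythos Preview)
Your overall architecture is correct and matches the paper: a Callias perturbation $\mc{B}_\psi=\wt D+\psi\sigma$ of the Killing Dirac operator, a spinor asymptotic to an imaginary Killing spinor on $\mc{E}$, and a mass formula obtained by integration by parts. The gap is in your proposed mechanism for absorbing the gradient term.

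You want to calibrate a single potential $f$ so that the cross term $\langle i\,c(df)\psi,\psi\rangle$ is pointwise dominated by $f^2+nf+\tfrac14(R_g+n(n-1))$. But this is impossible under the bare hypothesis $R_g\ge -n(n-1)$: you need $f\equiv 0$ on (a neighborhood of infinity in) $\mc{E}$ so that the boundary term reduces to the mass, and then on the collar where $f$ begins to rise from $0$ the quantity $f^2-|df|+nf$ is strictly negative while the curvature term may vanish identically. The Riccati equation $y^2-y'+ny=0$ with $y(0)=0$ forces $y\equiv 0$, so no profile can lift off from zero while keeping pointwise nonnegativity unless extra positive scalar curvature is available on the transition region. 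The ``distance estimates'' in the paper (Lemmas \ref{lemma:fun1}--\ref{lemma:fun2}) do exactly this kind of Riccati construction, but they are used for Theorem~\ref{thm:DE} (compact boundary with a scalar-curvature buffer $\kappa>0$), not for the arbitrary-ends theorem.

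The paper's actual device for Theorem~\ref{main1} is different. One fixes the collar $K\subset M_*\subset\mc{E}$ once and for all and works on a family of compact exhaustions $M_\lambda\supset M_*$; the potential is $\psi_\lambda=\lambda f$ on $K$ (with $f:M_*\to[0,1]$ fixed) and an ODE-built $h_\lambda$ on $M_\lambda\setminus M_*$. On $M_\lambda\setminus M_*$ the pointwise inequality $h_\lambda^2-|dh_\lambda|+nh_\lambda\ge 0$ holds by construction; on $K$ the negative part is only $(\theta_\lambda)_-\le \lambda\,\|df\|_\infty$, and it is absorbed not pointwise but in $L^2$, using the weighted Poincar\'e inequality for $\wt\nabla$ (Proposition~\ref{pro:WPI}), which holds precisely because nontrivial imaginary Killing spinors are never in $L^2$. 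Taking $\lambda$ below $c_0/(2\|df\|_\infty)$, where $c_0=\operatorname{ess\,inf}_K w_*>0$, makes the integrand $w_*-(\theta_\lambda)_-$ positive on $K$ and yields $\mf m_{\mc E}(V^-_{u_0})+\mf m_{\mc E}(V^+_{v_0})\ge 0$. So the missing idea in your outline is the exhaustion/limit $\lambda\to 0$ together with the Poincar\'e-weight absorption on the fixed collar, rather than a pointwise calibration of a single $f$.
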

Using Witten's spinorial argument and following the method of Cecchini and Zeidler \cite{CZ2021}, we can prove Conjecture \ref{PMC: hperbolic} in the spin setting. 
\begin{theorem}\label{main1}
Let $(M,g)$ be a complete connected $n$-dimensional Riemannian spin manifold without boundary such that $R_g\geq -n(n-1)$ and let $\mc{E}\subset M$ be an asymptotically hyperbolic end. Then the mass functional $\mf{m}_\mathcal{E}$ is timelike future-directed or zero. Moreover, it vanishes if and only if $(M,g)$ is isometric to the hyperbolic space. 
\end{theorem}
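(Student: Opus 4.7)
The plan is to extend Chru\'sciel and Herzlich's Witten-type proof of Theorem \ref{PMT: hyperbolic} to the single-end setting by inserting a Callias potential along the lines of Cecchini--Zeidler \cite{CZ2021}. The building block is the hyperbolic spin connection $\hat\n_X = \n_X + \tfrac{i}{2} X \cdot$ on the complex spinor bundle $S \to M$, whose parallel sections on $\mb{H}^n$ are imaginary Killing spinors, and its associated Dirac operator $\hat D = D + \tfrac{in}{2}$. The Schr\"odinger--Lichnerowicz--Weitzenb\"ock formula
$$\hat D^{*}\hat D = \hat\n^{*}\hat\n + \tfrac14\bigl(R_g + n(n-1)\bigr)$$
is nonnegative under the hypothesis on $R_g$, and the boundary integral at infinity of an asymptotic imaginary Killing spinor $\psi_{0}$ on $\mc{E}$ reproduces the mass functional $\mf{m}_\mc{E}$ paired with the causal vector associated to $\psi_{0}$.

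\textbf{Callias modification and solvability.} To obtain a global spinor $\psi = \psi_{0} + \phi$ with $\hat D \psi = 0$ and $\phi$ decaying on \emph{every} end of $M$, I would fix a smooth function $f : M \to [0,\infty)$ which vanishes on a neighborhood of infinity in $\mc{E}$ and grows like a proper distance function on $M \setminus \mc{E}$, and work with the Dirac--Callias operator on $S \otimes \mb{C}^{2}$ given by $\hat{\mc{D}}_f = \hat D \otimes \sigma^{1} + f \otimes \sigma^{2}$, whose square obeys
$$\hat{\mc{D}}_f^{\,2} = \hat\n^{*}\hat\n + \tfrac14\bigl(R_g + n(n-1)\bigr) + f^{2} + c(\n f),$$
with $c(\n f)$ a symmetric bundle endomorphism pointwise bounded by $|\n f|$. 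Choosing $f$ so that $f^{2} - |\n f|$ is eventually nonnegative gives coercivity outside a compact set. Standard asymptotically hyperbolic Fredholm theory on $\mc{E}$ combined with this coercivity on $M \setminus \mc{E}$ yields invertibility of $\hat{\mc{D}}_f$ on a suitable weighted Sobolev space, and hence the desired $\phi$.

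\textbf{Mass positivity and rigidity.} Integration by parts of $\hat{\mc{D}}_f^{\,2}$ against $\psi$ produces
$$\mf{m}_\mc{E}(\psi_{0}) = \int_{M}\bigl(|\hat\n\psi|^{2} + \tfrac14(R_g + n(n-1))|\psi|^{2} + (f^{2} + c(\n f))|\psi|^{2}\bigr)\,d\mu_g \geq 0,$$
and since this holds for every future-directed $\psi_{0}$, the functional $\mf{m}_\mc{E}$ is timelike future-directed or zero. In the rigidity case $\mf{m}_\mc{E} \equiv 0$, every term in the integrand vanishes, so one obtains a full space of imaginary Killing spinors on $(M,g)$ with $R_g = -n(n-1)$. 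Their existence forces the Ricci tensor to equal $-(n-1)g$ pointwise, and a standard analytic continuation argument using the overdetermined imaginary Killing spinor system together with the asymptotic matching on $\mc{E}$ identifies the universal cover of $M$ with $\mb{H}^{n}$; connectedness and completeness then give $(M,g)$ isometric to $\mb{H}^{n}$.

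\textbf{Main obstacle.} I expect the principal difficulty to lie in the Callias construction: one must choose $f$ with growth slow enough that $f^{2} + c(\n f)$ remains nonnegative at infinity on $M\setminus\mc{E}$, fast enough that $\hat{\mc{D}}_f$ is Fredholm between the weighted spaces adapted to the exponential volume growth of an AH end, and vanishing near infinity in $\mc{E}$ so that the Chru\'sciel--Herzlich mass boundary term is preserved. A careful calibration, parallel to the one in \cite{CZ2021} but in the asymptotically hyperbolic regime, is the heart of the argument.
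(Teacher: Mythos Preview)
Your overall strategy---Witten's argument with a Callias potential that vanishes on $\mc{E}$ and controls the remaining ends, in the style of \cite{CZ2021}---matches the paper's. But two points in your execution depart from what actually works, and one of them is a genuine gap.

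\textbf{The operator is not self-adjoint as written.} Your $\hat D = D + \tfrac{in}{2}$ is not formally self-adjoint, so $\hat{\mc{D}}_f = \hat D\otimes\sigma^1 + f\otimes\sigma^2$ is not either, and the displayed formula for $\hat{\mc{D}}_f^{\,2}$ is incorrect (you would get $\hat D^2$, not $\hat D^*\hat D$, and these differ by a first-order term $inD$). The paper fixes this by working on the doubled bundle $S=\s{S}\oplus\s{S}$ with the \emph{self-adjoint} operator $\wt{D}=D+\tfrac{n}{2}\sigma$, where $\sigma$ is the off-diagonal involution; then $\wt{D}^2=\wt\n^*\wt\n+\tfrac14(R_g+n(n-1))$ holds exactly, and the Callias perturbation $\mc{B}_\psi=\wt{D}+\psi\sigma$ squares to produce not only $\psi^2$ and $c(d\psi)\sigma$ but also a \emph{linear} term $n\psi$. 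That extra $n\psi$ is what drives the hyperbolic ODE $h^2-|dh|+nh\geq 0$ used to build the potential, and it is absent from your formula.

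\textbf{The transition region is the real issue, and the resolution is missing.} You want $f$ to vanish near infinity in $\mc{E}$ and be a proper distance-like function on $M\setminus\mc{E}$; you then need the full integrand in the mass formula to be nonnegative. But any such $f$ must pass through a region where $f$ is small and $|df|$ is not, so $f^2-|df|<0$ there, and ``eventually nonnegative'' does not yield $\mf{m}_\mc{E}\geq 0$. The paper (and \cite{CZ2021}) handles this by a mechanism you do not invoke: a \emph{weighted Poincar\'e inequality} for $\wt\n$ on the asymptotically hyperbolic piece $M_*\subset\mc{E}$, with weight $w_*$ bounded below by some $c_0>0$ on the compact collar $K$ where the potential transitions. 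The potential is then taken of the form $\lambda f$ on $K$ with a \emph{small} parameter $\lambda$, so that $(\theta_\lambda)_-\leq\lambda\|df\|_\infty<c_0$ is absorbed by the $\|\wt\n u\|^2$ term via the Poincar\'e inequality. This small-parameter/Poincar\'e step is precisely the ``calibration'' you allude to, and without it the argument does not close.

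\textbf{The paper does not use a proper potential.} Rather than letting $f\to\infty$ on $M\setminus\mc{E}$ and invoking Fredholm theory on the whole noncompact $M$, the paper exhausts $M$ by compact manifolds $M_\lambda$ with boundary at distance roughly $\tfrac{1}{n}\log(1+\tfrac{n}{\lambda})$ from $\partial M_*$, imposes the chirality boundary condition $\chi u=u$ on $\partial M_\lambda$, and solves a boundary value problem there (Proposition~\ref{BVP1}). The potential $h_\lambda$ on $M_\lambda\setminus M_*$ solves the ODE $h^2-h'+nh=0$, which blows up in finite distance; the resulting boundary term is $\eta_\lambda\geq 0$. Letting $\lambda\to 0$ sends $M_\lambda$ to all of $M$ and gives $\mf{m}_\mc{E}(V^-_{u_0})+\mf{m}_\mc{E}(V^+_{v_0})\geq 0$.

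\textbf{Rigidity.} When $\mf{m}_\mc{E}=0$, the paper shows the correction spinors $v_\lambda$ are independent of $\lambda$ on $M_*$ (by the Poincar\'e inequality again) and vanish on $M_\lambda\setminus M_*$, producing a global nontrivial imaginary Killing spinor; then one cites Baum \cite{B1989} directly rather than passing through the Ricci tensor.
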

In fact, similar to \cite[Theorem A]{CZ2021}, we can give a quantification for the "positivity" of the mass functional $\mf{m}_\mathcal{E}$ of the asymptotically hyperbolic end $\mc{E}$. More precisely, let $\mc{U}_R(\mc{E})\subseteq M$ denote the open neighborhood of radius $R$ around $(\mc{E},g)$, if the mass is not timelike future-directed or zero, then we can obtain an explicit upper bound for $R$ such that $\o{\mc{U}_R(\mc{E})}$ is complete, spin and $R_g\geq -n(n-1)$, and it follows that $\o{\mc{U}_R(\mc{E})}\subsetneqq M$.
See Theorem \ref{quantification} for more details. 
  
We also consider the positive mass theorem for an asymptotically hyperbolic manifold with compact boundary. 
By deriving explicit distance estimates, we obtain
\begin{theorem}\label{thm:DE}
Let $(M,g)$ be an $n$-dimensional complete asymptotically hyperbolic	 spin manifold of  $R_g\geq -n(n-1)$ with compact boundary, $M_0\subseteq M_1\subseteq M$ be codimension zero submanifolds with boundary such that $M_0$ contains all asymptotically hyperbolic ends of $M$. Moreover, we assume that $R_g\geq (-1+\kappa)n(n-1)$ for some $0<\kappa<1$ on $M_1\backslash M_0$. We let $d=\mr{dist}_g(\p M_0,\p M_1)$ and $l=\mr{dist}_g(\p M_1,\p M)$ and define
\begin{align*}
\begin{split}
  \Psi(d,l):= \begin{cases}
 	\frac{2(n-1)}{(\frac{n}{\lambda(d)}+1)e^{-nl}-1},& \text{if }d<-t_0\text{ and }l<\frac{1}{n}\log(1+\frac{n}{\lambda(d)}),\\
 	\infty&\text{otherwise},
 \end{cases}
 \end{split}
\end{align*}
 where $t_0$ and $\lambda(d)$ are defined by \eqref{t0} and \eqref{lambda} respectively. 
 In this situation, if the mean curvature of $\p M$ satisfies 
 \begin{equation*}
  H_g+(n-1)>-\Psi(d,l)\text{ on } \p M,
\end{equation*}
then the mass functional $\mf{m}_\mathcal{E}$ of each end is timelike future-directed.
\end{theorem}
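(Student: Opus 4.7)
The approach I would take is a spinorial Witten-type argument combined with a Callias modification of the Dirac operator, in the spirit of the method of Cecchini--Zeidler \cite{CZ2021} adapted to the hyperbolic setting via the imaginary Killing connection that underlies Theorem \ref{PMT: hyperbolic}. The core idea is to introduce an auxiliary potential $\phi : M \to \mb{R}$ that vanishes on a neighborhood of the asymptotically hyperbolic end $\mc{E}$ (so the asymptotic boundary integral at infinity still reproduces the mass) and attains a prescribed value on $\p M$ (so the boundary integral there absorbs the mean curvature defect).

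First, I would fix an imaginary Killing spinor $\sigma_0$ at infinity and solve a boundary value problem of the form $\slashed{D}_\phi \sigma = 0$ on $M$ with a self-adjoint MIT-bag (or chirality) boundary condition on $\p M$ and asymptotic data $\sigma_0$. The associated Lichnerowicz--Weitzenb\"ock formula, with the imaginary-Killing correction absorbing the baseline $-n(n-1)$ and an additional quadratic term from $\phi$, should produce
\begin{equation*}
\mf{m}_\mc{E}(\sigma_0) = \int_M \mc{Q}(\sigma, \phi)\, dV + \int_{\p M} \mc{B}(\sigma, \phi, H_g)\, dA,
\end{equation*}
where $\mc{Q} \geq 0$ pointwise provided $\phi$ obeys a differential inequality of the shape $|d\phi|^2 \leq c_n(R_g + n(n-1))$ (possibly combined with a term in $\phi$ itself), and $\mc{B}$ is pointwise of the form $\tfrac{1}{2}(H_g + (n-1) + F(\phi|_{\p M}, \p_\nu \phi|_{\p M}))|\sigma|^2$ for an explicit function $F$ determined by the Callias setup.

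Second, the two-stage construction of $\phi$ produces $\Psi(d,l)$. On $M_1 \setminus M_0$ of thickness $d$, the curvature surplus $R_g + n(n-1) \geq \kappa n(n-1)$ allows $\phi$ to grow from $0$ at $\p M_0$ to an extremal value at $\p M_1$, determined by the extremal solutions of the associated ODE; these are parametrized by the quantities $t_0 = t_0(\kappa, d)$ defined in \eqref{t0} and $\lambda(d)$ defined in \eqref{lambda}. On $M \setminus M_1$ of thickness at most $l$, only the weaker $R_g + n(n-1) \geq 0$ is available, so the potential must solve a more restrictive ODE with initial data $(t_0, \lambda(d))$ at $\p M_1$. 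Integrating this ODE up to $\p M$ yields a boundary value for $F$ equal to $\Psi(d,l)$; the hyperbolic behaviour $e^{\pm n r}$ of the ODE solutions accounts for the $e^{-nl}$ factor, and the cutoff $l < \tfrac{1}{n} \log(1 + n/\lambda(d))$ is precisely the threshold beyond which $\phi$ cannot be extended to $\p M$ with finite values.

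With this $\phi$, the hypothesis $H_g + (n-1) > -\Psi(d,l)$ gives $\mc{B} \geq 0$ on $\p M$ with strict inequality unless $\sigma|_{\p M} \equiv 0$; combined with $\mc{Q} \geq 0$, one obtains $\mf{m}_\mc{E}(\sigma_0) \geq 0$ for every admissible $\sigma_0$, which by the algebraic criterion of \cite{CH2003} forces $\mf{m}_\mc{E}$ to be timelike future-directed. The main obstacles I anticipate are the precise design of the Callias potential so that the boundary integrand $\mc{B}$ takes exactly the form $(H_g + (n-1) + \Psi(d,l))|\sigma|^2$, and the sharp ODE comparison on $M \setminus M_1$ producing $\Psi$ with its explicit constants; the analytic solvability of the Callias boundary value problem on an asymptotically hyperbolic manifold with compact boundary should be largely parallel to the setup for Theorem \ref{main1}, so I expect that portion to be more routine.
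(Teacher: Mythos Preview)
Your proposal is essentially the paper's own argument: a Callias potential $\psi$ vanishing on $M_0$, built in two stages by ODE comparison on $M_1\setminus M_0$ (using the $\kappa$-surplus) and on $M\setminus M_1$, plugged into the Lichnerowicz formula for the imaginary-Killing Dirac operator with chirality boundary condition, and concluded via the mass formula of Theorem~\ref{thm1}. The only correction is that the bulk differential inequality is first-order in the gradient, namely $\bar\theta_\psi = \tfrac{n}{4(n-1)}(R_g+n(n-1)) + \psi^2 - |d\psi| + n\psi \ge 0$, not quadratic in $|d\phi|$; the resulting comparison ODEs $\kappa\tfrac{n^2}{4}+y^2-y'+ny=0$ on $M_1\setminus M_0$ and $y^2-y'+ny=0$ on $M\setminus M_1$ are exactly what produce $t_0$, $\lambda(d)$, and the explicit $\Psi(d,l)$ (and note $t_0$ depends only on $\kappa$, not on $d$).
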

Note that $\Psi(d,l)$ is strictly positive  if $d<-t_0$ and $l<\frac{1}{n}\log(1+\frac{n}{\lambda(d)})$. As a corollary, we can prove 
\begin{corollary}[{\cite[Theorem 4.7]{CH2003}}]\label{cor1}
	Let $(M,g)$ be an $n$-dimensional complete asymptotically hyperbolic	 spin manifold of  $R_g\geq -n(n-1)$ with compact boundary. If $H_g+(n-1)\geq 0$ on all of $\partial M$, then the mass functional $\mf{m}_\mathcal{E}$ of each end is timelike future-directed.
	\end{corollary}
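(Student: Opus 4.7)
The plan is to derive Corollary \ref{cor1} as a direct application of Theorem \ref{thm:DE} with a carefully chosen pair of auxiliary domains $(M_0, M_1)$. The key observation is that the corollary's hypothesis $H_g + (n-1) \geq 0$ is a non-strict bound from below by zero, while the theorem requires the pointwise inequality $H_g + (n-1) > -\Psi(d,l)$; hence it will suffice to exhibit admissible $M_0 \subseteq M_1$ for which $\Psi(d,l)$ is strictly positive, since then $-\Psi(d,l) < 0 \leq H_g + (n-1)$ on $\p M$ automatically.

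First, I would set $M_0 = M_1$ equal to the complement in $M$ of a small open tubular neighborhood of $\p M$ of width $l > 0$. Under this choice, $M_0 = M_1$ contains all asymptotically hyperbolic ends of $M$ (since the removed collar is a neighborhood of the compact set $\p M$), the set difference $M_1 \setminus M_0$ is empty, so the local scalar curvature hypothesis $R_g \geq (-1+\kappa)n(n-1)$ on $M_1 \setminus M_0$ is vacuously satisfied for any $\kappa \in (0,1)$, and $d = \mr{dist}_g(\p M_0, \p M_1) = 0$. The distance $l = \mr{dist}_g(\p M_1, \p M)$ is just the width of the removed collar and can be made as small as we wish. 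Inspecting \eqref{t0} and \eqref{lambda}, one expects $t_0 < 0$ and $\lambda(0)$ finite and positive, so that $d = 0 < -t_0$ and, for $l$ small enough, $l < \tfrac{1}{n}\log(1 + n/\lambda(0))$. The first branch of the piecewise definition of $\Psi$ then applies and yields $\Psi(0,l) > 0$.

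With such a choice in hand, the strict pointwise inequality $H_g + (n-1) > -\Psi(0,l)$ follows immediately from the corollary's hypothesis, and Theorem \ref{thm:DE} delivers the desired conclusion that the mass functional $\mf{m}_\mathcal{E}$ of each end is timelike future-directed. The only point that demands real attention is verifying that the trivial value $d = 0$ falls into the first branch of the piecewise definition of $\Psi$, which should be a direct unpacking of \eqref{t0} and \eqref{lambda}; no additional analytic machinery beyond Theorem \ref{thm:DE} itself is needed.
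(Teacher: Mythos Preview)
Your argument has a concrete gap at the step where you assert that ``$\lambda(0)$ [is] finite and positive''. This is false: by construction, $\lambda(\delta)=y(\delta+t_0)$ for the ODE solution $y$, and $t_0$ is defined precisely by $y(t_0)=0$, so $\lambda(0)=0$. Equivalently, in the closed form \eqref{lambda} one has $\coth(\tfrac{n}{2}\sqrt{1-\kappa}\,\delta)\to+\infty$ as $\delta\to 0^+$, forcing $\lambda(\delta)\to 0^+$. Consequently $n/\lambda(0)=+\infty$, and plugging into the definition of $\Psi$ gives
\[
\Psi(0,l)=\frac{2(n-1)}{(\tfrac{n}{\lambda(0)}+1)e^{-nl}-1}=0
\]
for every $l>0$. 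The hypothesis of Theorem~\ref{thm:DE} then reads $H_g+(n-1)>0$ on $\p M$, which is \emph{strictly} stronger than the assumption $H_g+(n-1)\ge 0$ of Corollary~\ref{cor1}. So nothing follows in the borderline case $H_g\equiv -(n-1)$.

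The evident repair---taking $M_0\subsetneq M_1$ with $d>0$ so that $\lambda(d)>0$ and $\Psi(d,l)>0$---also fails under the corollary's hypotheses: as soon as $M_1\setminus M_0$ is nonempty, Theorem~\ref{thm:DE} requires $R_g\ge(-1+\kappa)n(n-1)$ on $M_1\setminus M_0$ for some $\kappa\in(0,1)$, and no such improvement over $R_g\ge -n(n-1)$ is available. Thus Theorem~\ref{thm:DE} alone does not deliver the corollary when both $R_g\equiv -n(n-1)$ and $H_g\equiv -(n-1)$ are permitted. The route that actually works is to apply Theorem~\ref{thm1} with $\psi\equiv 0$: then $\bar\theta_0=\tfrac{n}{4(n-1)}(R_g+n(n-1))\ge 0$ and $\bar\eta_0=\tfrac{n}{2(n-1)}(H_g+(n-1))\ge 0$, so $\mf m_{\mc E}$ is timelike future-directed \emph{or zero}; the residual task is to exclude the zero case, which needs a separate rigidity argument exploiting $\p M\neq\emptyset$ rather than the distance machinery of Theorem~\ref{thm:DE}.
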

Note that our definition of mean curvature differs by a sign from \cite[Theorem 4.7]{CH2003}.
If  $R_g\geq (-1+\kappa)n(n-1)$ on all of $M$ for some $\kappa\in (0,1)$, then  the distance $d=\mr{dist}_g(\p M_0,\p M_1)$ can be large enough so that $d\geq -t_0$. As a result of Theorem \ref{thm:DE}, we obtain

\begin{corollary}\label{cor2}
	Let $(M,g)$ be an $n$-dimensional complete asymptotically hyperbolic	 spin manifold of  $R_g\geq (-1+\kappa)n(n-1)$ with compact boundary, where $\kappa\in (0,1)$ is a constant. Then the mass functional $\mf{m}_\mathcal{E}$ of each end is timelike future-directed.
\end{corollary}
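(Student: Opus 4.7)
The plan is to deduce Corollary~\ref{cor2} from Theorem~\ref{thm:DE} by choosing the auxiliary codimension-zero submanifolds $M_0 \subseteq M_1 \subseteq M$ so that the mean curvature hypothesis becomes vacuous. The key observation is that the assumption $R_g \geq (-1+\kappa)n(n-1)$ holds throughout $M$, which is strictly stronger than the localized hypothesis in Theorem~\ref{thm:DE} (needed only on $M_1 \setminus M_0$). This gives complete freedom to position $\p M_0$ as deep in the ends as we please, while still guaranteeing the scalar curvature hypothesis of Theorem~\ref{thm:DE}.

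First I would take $M_1 = M$, so that $\p M_1 = \p M$ and $l = \mr{dist}_g(\p M_1, \p M) = 0$. Then I would choose $M_0$ to be a closed neighborhood of all asymptotically hyperbolic ends whose boundary $\p M_0$ is a smooth compact hypersurface pushed far into the ends --- concretely, one can take $\p M_0$ to be a coordinate sphere in the asymptotic chart at a large enough radius. Since $(M, g)$ is complete, asymptotically hyperbolic, and has compact boundary, the distance $d = \mr{dist}_g(\p M_0, \p M)$ grows without bound as the radius is sent to infinity. In particular, I can arrange $d \geq -t_0$, where $t_0$ is the constant defined in~\eqref{t0}. For this choice the definition of $\Psi$ falls into the ``otherwise'' branch, so $\Psi(d, l) = \infty$, and the hypothesis $H_g + (n-1) > -\Psi(d, l)$ on $\p M$ is vacuously satisfied. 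The scalar curvature hypothesis on $M_1 \setminus M_0 \subseteq M$ is immediate from the global lower bound, so Theorem~\ref{thm:DE} applies and yields the conclusion.

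There is essentially no obstacle once Theorem~\ref{thm:DE} is in hand: the only point requiring verification is that $\p M_0$ can be chosen smoothly with $d$ arbitrarily large, which follows from standard facts about asymptotically hyperbolic ends and completeness. All the substantive analytic content --- the Callias-operator and spinor-based distance estimates --- is absorbed into Theorem~\ref{thm:DE} and is used here as a black box. The whole corollary is therefore a matter of observing that the distance parameter $d$ can be pushed past the threshold $-t_0$ without cost to any other assumption.
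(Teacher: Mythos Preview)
Your proposal is correct and follows essentially the same approach as the paper: the paper's entire argument for Corollary~\ref{cor2} is the one-sentence observation preceding its statement, namely that because $R_g\geq(-1+\kappa)n(n-1)$ holds globally, one may arrange $d\geq -t_0$ and then invoke Theorem~\ref{thm:DE}. Your choice $M_1=M$ (so $l=0$) together with pushing $\partial M_0$ out along the ends to make $d$ large is a valid and explicit implementation of this idea, landing in the ``otherwise'' branch of $\Psi$ and rendering the mean-curvature hypothesis vacuous.
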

\begin{remark}
	Note that Corollary \ref{cor2} also holds for an asymptotically hyperbolic end in a complete spin manifold without boundary. In fact, by Theorem \ref{main1}, the mass $\mf{m}_\mathcal{E}$ of an end $\mc{E}$ is timelike future-directed or zero. If it is zero, then $(M,g)$ is isometric to the hyperbolic space and it follows that $R_g=-n(n-1)$. This is contradictory to the condition $R_g\geq (-1+\kappa)n(n-1)$. Hence the mass $\mf{m}_\mathcal{E}$ is timelike future-directed.
\end{remark}

The article is organized as follows: In Section \ref{section: AHM}, we will recall the definitions of asymptotically hyperbolic manifolds (ends) and the mass functional. In Section \ref{CM}, we will recall the definitions of the Callias operators and derive the mass formulas using the Callias operators. In Section \ref{distance},  we will define a function $\Psi$ using explicit distance estimates and prove the positive mass theorem provided that the mean curvature of the boundary is bounded from below by $-\Psi$.  Theorem \ref{thm:DE}, Corollary \ref{cor1} and Corollary \ref{cor2} will be proved in this section.  In Section \ref{PMTAE}, we will consider the mass of an asymptotically end in a complete spin manifold without boundary, and we will prove Theorem \ref{main1}. In Section \ref{APP}, we will solve the boundary value problem associated with a Callias operator, the solutions are essentially used when estimating the mass. 

\section{Asymptotically hyperbolic manifolds and mass functional}\label{section: AHM}

In this section, we will recall the definitions of asymptotically hyperbolic manifolds (ends) and the mass functional. One can refer to \cite{CH2003, AL2022, HMR2015}.

\begin{definition}
	A Riemannian manifold $(M,g)$ is said to be {\it static} if there is a non-trivial solution $V$ of the following equations
\begin{equation*}
  	 \begin{cases}
 	\n^2_gV+\Lambda Vg-V\mr{Ric}_g=0,\\
 	\Delta_g V+\Lambda V=0
 \end{cases}
\end{equation*}
for some constant $\Lambda<0$. In this case, $V$ is called a static potential.
\end{definition}
The hyperboloid model for hyperbolic space is given by 
\begin{equation*}
  \mb{H}^n=\{x\in\mb{R}^{1,n}|\left\langle x,x\right\rangle_{1,n}=-1\}\subset \mb{R}^{1,n},
\end{equation*}
where $\mb{R}^{1,n}$ is the Minkowski space with the flat Lorentzian metric
\begin{equation*}
  \left\langle x,x\right\rangle_{1,n}=-x_0^2+x_1^2+\cdots+x_n^2,\quad x=(x_0,x_1,\cdots,x_n)\in\mb{R}^{1,n}.
\end{equation*}
The induced (Riemannian) metric on $\mb{H}^n$ is 
\begin{equation*}
  b=\frac{dr^2}{1+r^2}+r^2 g_{\mb{S}^{n-1}},
\end{equation*}
where $g_{\mb{S}^{n-1}}$ stands for the round metric on the unit sphere $\mb{S}^{n-1}$, $r=|x'|_\delta$, $x'=(0,x_1,\ldots,x_n)$ and
\begin{equation*}
  |x'|_\delta=\sqrt{x_1^2+\cdots+x_n^2}.
\end{equation*}
Then $(\mb{H}^n,b)$ is a complete static manifold with $\Lambda=-n$, whose space of static potentials is given by 
$$\mc{N}_b=\mr{Vect}\{V_{(0)},V_{(1)},\cdots, V_{(n)}\},$$
 where $V_{(0)}=x_0|_{\mb{H}^n}=\sqrt{r^2+1}$ and $V_{(i)}=x_i|_{\mb{H}^n}$, $1\leq i\leq n$, see \cite[(1.5) and (1.6)]{CH2003}. The space $\mc{N}_b$ is naturally endowed with a Lorentzian metric $\eta$ with the signature $(+,-,\cdots,-)$, and the metric $\eta$ satisfies
 \begin{equation*}
  \eta(V_{(0)},V_{(0)})=1,\quad \eta(V_{(i)},V_{(i)})=-1,
\end{equation*}
for $1\leq i\leq n$. 
 
Recall that one can parameterize $\mb{H}^n$ by polar coordinates $(r,\theta)$ where $r=|x'|_\delta$ and $\theta=(\theta_1,\ldots, \theta_n)\in\mb{S}^{n-1}$. Let $\epsilon_1,\ldots, \epsilon_{n-1}$ be an orthonormal frame for $g_{\mb{S}^{n-1}}$. Then 
$\{f_i\}_{i=1}^n$, with $f_a=r^{-1}\epsilon_a$, $a=1,\ldots,n-1$, and $f_n=\sqrt{r^2+1}\p_r$ is an orthonormal frame for $b$.
For any $r_0>0$, we denote 
\begin{equation*}
  \mb{H}^n_{r_0}:=\{x\in \mb{H}^n,r(x)\geq r_0\}.
\end{equation*}
Following \cite{CH2003}, the asymptotically hyperbolic ends and asymptotically hyperbolic manifolds can be defined as follows.
\begin{definition}[Asymptotically hyperbolic ends]
	Let $(M,g)$ be a smooth $n$-dimensional Riemannian manifold. We say an open subset $\mc{E}\subseteq M$ is an {\it asymptotically hyperbolic end} if there exists a diffeomorphism 
\begin{equation*}
  \Phi:\mathbb{H}^n_{r_0}\to \mathcal{E}
\end{equation*}
for some $r_0>0$, such that $\Phi^*(g|_{\mc{E}})$ and $b$ are uniformly equivalent on $\mb{H}^n_{r_0}$ and the metric components $g_{ij}:=(\Phi^*(g|_\mathcal{E}))(f_i,f_j)$ satisfy
\begin{equation}
|g_{ij}-\delta_{ij}|+|f_k (g_{ij})| = o(r^{-\tfrac{n}{2}}). \label{decay rate}
\end{equation}
Moreover, we assume that $r(R_g+n(n-1))$ belongs to $L^1(\mathcal{E},\mathbb{R})$ where $R_g$ denotes the scalar curvature of the Riemannian manifold $(M,g)$.
\end{definition}

\begin{definition}[Asymptotically hyperbolic manifolds]
A Riemannian manifold $(M,g)$ with compact boundary is said to be {\it asymptotically hyperbolic} if there exists a bounded subset $K\subset M$ whose complement $M\backslash K$ is a non-empty disjoint union of finitely many asymptotically hyperbolic ends $\mc{E}_1,\ldots,\mc{E}_N\subseteq M$.	
\end{definition}
\begin{figure}[ht]
\centering
\includegraphics[width=0.4\textwidth]{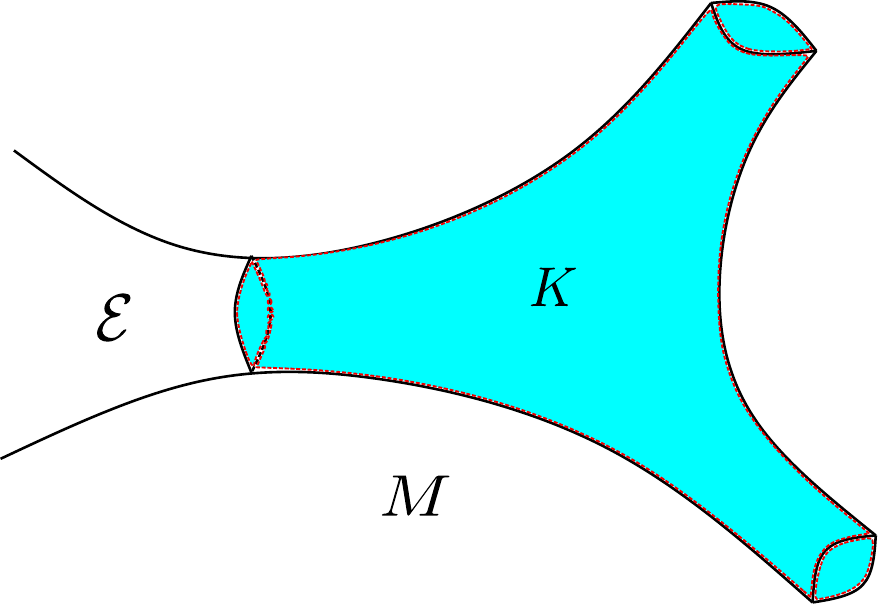}
\caption{Asymptotically hyperbolic manifolds.}
\end{figure}
Let $\mc{E}$ be an asymptotically hyperbolic end with a diffeomorphism $\Phi:\mb{H}^n_{r_0}\to \mc{E}$.
Denote $e:=\Phi^*g-b$ and define the following one form by
\begin{equation*}
  \mb{U}(V,e)=V(\mr{div}_be-d\mr{tr}_be)-i_{\n_b V} e+\mr{tr}_b edV,
\end{equation*}
where $i_X$ denotes the contraction by a vector field $X$. 
Set $S^{n-1}_{r}:=\{x\in \mb{H}^n_{r_0}:|x'|_\delta=r\}$, $r>r_0$. The mass functional $\mf{m}_\mathcal{E}$ on $\mc{N}_b$ is defined by
\begin{equation*}
  \mf{m}_\mathcal{E}(V)=\lim_{r\to+\infty}\int_{S^{n-1}_{r}}\mb{U}(V,e)(\nu_r) dS^{n-1}_{r}
\end{equation*}
for any $V\in \mc{N}_b$.  By \cite[Proposition 2.2]{CH2003}, $\mf{m}_\mathcal{E}(V)$ exists and is finite. Here $\nu_r$ denotes the unit infinity-pointing normal vector field to $S^{n-1}_{r}$.  

Following \cite{CH2003}, we recall the following definition of causal (timelike) future-directed linear functional.
\begin{definition}
We say the linear functional $\mf{m}_\mathcal{E}$ on $\mc{N}_b$ is {\it causal} (resp. {\it timelike}) {\it future-directed} if $(\mf{m}_\mathcal{E}(V_{(0)}),\cdots, \mf{m}_\mathcal{E}(V_{(n)}))\in \mathbb{R}^{1,n}$ in any $\eta$-orthonormal and future-oriented basis $(V_{(0)},\cdots, V_{(n)})$ satisfies
\begin{equation}
  (\mf{m}_\mathcal{E}(V_{(0)}))^2-\sum_{i=1}^n(\mf{m}_\mathcal{E}(V_{(i)}))^2 \label{invariant}
\end{equation}
is non-negative and $\mf{m}_\mathcal{E}(V_{(0)})\geq 0$ (resp. $\mf{m}_\mathcal{E}(V_{(0)})> 0$).
\end{definition}
\begin{remark}
According to \cite[Theorem 2.3]{CH2003}, \eqref{invariant} and its sign is actually independent of the chart $\Phi$. This also justifies our use of $\mf{m}_\mathcal{E}$. See also \cite{michel-geometric-2011}.
\end{remark}

\section{Callias operators and mass formulas}\label{CM}

In this section, we will recall the definition of the Callias operator and derive the formulas for the mass functional. 

\subsection{Callias operators}

In this subsection, we will recall the definition of the Callias operator, which can be found in \cite[Section 2.3]{CZ2021}.

Let $(M,g)$ be a complete asymptotically hyperbolic spin manifold with compact boundary. Let $\s{S}\to M$ denote the complex spinor bundle of $M$. Then 
$$S:=\s{S}\oplus\s{S}$$
 becomes an $\mb{Z}_2$-graded Dirac bundle with the induced connection 
$\n=\n_{\s{S}}\oplus\n_{\s{S}}$ and the Clifford multiplication
\begin{equation*}
  c(\xi)=\begin{pmatrix}
  0&c_{\s{S}}(\xi) \\
  c_{\s{S}}(\xi)&0 
\end{pmatrix},
\end{equation*}
 where $c_{\s{S}}$ and $\n_{\s{S}}$ are respectively the Clifford multiplication and connection on $\s{S}$. The Dirac operator on $S$ is given by
\begin{equation*}
  D=\begin{pmatrix}
  0&\s{D} \\
  \s{D}&0 
\end{pmatrix},
\end{equation*}
where $\s{D}:C^\infty(M,\s{S})\to C^\infty(M,\s{S})$ is the spinor Dirac operator on $(M,g)$. Set
\begin{equation*}
  \sigma:=\begin{pmatrix}
  0&-i \\
  i&0 
\end{pmatrix}.
\end{equation*}
 For a function $\psi\in C^\infty_c(M,\mb{R})$, the associated {\it Callias operator} is defined as
\begin{equation*}
  \mc{B}_\psi:=D+\psi\sigma.
\end{equation*}
For convenience, we also call $\mc{B}_\psi$ a Callias operator even if $\psi$ has no compact support. 
Let $\nu$ be the inward-pointing unit normal vector field to $\p M$. The {\it chirality operator} is defined as
\begin{equation}\label{chirality}
  \chi:=c(\nu^\flat)\sigma:S|_{\p M}\to S|_{\p M},
\end{equation}
where $\nu^\flat$ denotes the dual one form of $\nu$.
Denote by $C^\infty(M,S;\chi)$ the space of all smooth sections $u$ of $S$ such that $\chi(u|_{\p M})=u|_{\p M}$. We will use the analogous notation for other function spaces. 

\subsection{Spectral estimates} In this subsection, we will derive some spectral estimates of Callias operators for spin initial data sets. One can refer to \cite[Section 4.2]{CW2022} for the related calculations. 

Let $(M,g,k)$ be a spin initial data set where $k$ is a symmetric tensor of order two on a Riemannian manifold $(M,g)$.
Set
\begin{equation*}
  2\mu=R_g-|k|^2_g+(\mr{tr}_gk)^2,\quad  J_i=(\mr{div}_g k-d(\mr{tr}_gk))_i.
\end{equation*}
Recall that $\n=\n_{\s{S}}\oplus\n_{\s{S}}$ is the induced connection on the Dirac bundle $S=\s{S}\oplus\s{S}$, the associated Dirac operator is then  given by
\begin{equation*}
  D=c(e^i)\n_{e_i}.
\end{equation*}
One can define a new connection on $S$ by 
 \begin{equation*}
  \wt{\n}_{e_i}=\n_{e_i}-\tfrac{1}{2}k_{ij}c(e^j)\sigma,
\end{equation*}
which is indeed a connection  on $S$ since $\tfrac{1}{2}k_{ij}c(e^j)\sigma\in A^1(M,\mr{End}(S))$. Set
\begin{equation*}
  \wt{D}=c(e^i)\wt{\n}_{e_i}.
\end{equation*}
\begin{proposition}[{\cite[Proposition 4.6]{CW2022}}]\label{prop1}
For any smooth section $u\in C_c^\infty(M,S)$ with compact support, we have
\begin{itemize}
  \item[(i)] $\wt{\n}_{e_i}u=\n_{e_i}u+\frac{1}{2}k_{ji}\sigma c({e}^j)u$, $\wt{D}u=Du+ \tfrac{\mr{tr}_gk}{2}\sigma(u)$,
  \item[(ii)] $\n^*_{e_i}u=-\n_{e_i}u$, $\wt{\n}^*_{e_i}u=-\wt{\n}_{e_i}u-k_{ij}c({e}^j)\sigma(u)$, $D^*u=Du$, $\wt{D}^*u=\wt{D}u$;
  \item[(iii)] $D^2u=\n^*\n u+\mc{R}u$, $\wt{D}^2u=\wt{\n}^*\wt{\n}u+\wt{\mc{R}}u$, 
\end{itemize}
where $\mc{R}=\frac{R_g}{4}$ and $\wt{\mc{R}}=\frac{1}{2}(\mu-J_i c(e^i)\sigma)$.
\end{proposition}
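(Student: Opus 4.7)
The plan is to verify the three claims in sequence using direct computations based on (a) the block-matrix description of $S = \s{S} \oplus \s{S}$, (b) the anti-commutation of $\sigma$ with Clifford multiplication, and (c) the standard Lichnerowicz--Weitzenb\"ock formula on $\s{S}$.

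For (i), I would first record the key algebraic identity $\sigma c(\xi) = -c(\xi)\sigma$ on $S$, which is immediate from the $2\times 2$ block forms of $\sigma$ and $c(\xi)$. Combined with the symmetry $k_{ij} = k_{ji}$, this rewrites the definition $\wt{\n}_{e_i} = \n_{e_i} - \frac{1}{2}k_{ij}c(e^j)\sigma$ as $\n_{e_i} + \frac{1}{2}k_{ji}\sigma c(e^j)$. To obtain the formula for $\wt{D} u$, one composes with $c(e^i)$ and invokes the Clifford relation $c(e^i)c(e^j) + c(e^j)c(e^i) = -2\delta^{ij}$; contracting against the symmetric tensor $k_{ij}$ collapses the double sum to the trace, yielding $\wt{D} = D + \frac{\mr{tr}_g k}{2}\sigma$.

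For (ii), the identity $\n^*_{e_i}u = -\n_{e_i}u$ is the standard metric-compatibility of the spin connection. Since $\sigma^* = \sigma$ and $c(e^j)^* = -c(e^j)$ pointwise, one quickly verifies $(\sigma c(e^j))^* = \sigma c(e^j)$; combining this self-adjointness with $\n^*_{e_i} = -\n_{e_i}$ and rearranging via the anti-commutation relation of (i) gives $\wt{\n}^*_{e_i}u = -\wt{\n}_{e_i}u - k_{ij}c(e^j)\sigma u$. The identity $D^* = D$ is classical, and $\wt{D}^* = \wt{D}$ then follows because $\frac{\mr{tr}_g k}{2}\sigma$ is a self-adjoint zeroth-order perturbation.

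For (iii), $D^2 u = \n^*\n u + \tfrac{R_g}{4}u$ is the Lichnerowicz formula on $\s{S}$ applied diagonally to $S$. For $\wt{D}^2$, I would expand $\wt{D}^2 = c(e^i)c(e^j)\wt{\n}_{e_i}\wt{\n}_{e_j}$, split into symmetric and antisymmetric parts in $(i,j)$, and identify the symmetric part with $\wt{\n}^*\wt{\n}$ using (ii). The antisymmetric part produces a curvature-type expression: its pure spin-connection portion contributes $R_g/4$; the quadratic-in-$k$ terms coming from the $k$-correction in $\wt{\n}$ combine, via the Clifford relations and $\sigma c = -c\sigma$, into $\tfrac{1}{4}\bigl((\mr{tr}_g k)^2 - |k|^2\bigr)$; and the cross terms linear in $\n k$ organize into the $1$-form $\mr{div}_g k - d\,\mr{tr}_g k = J$. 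Recombining $R_g/4$ with the $k^2$-terms gives $\mu/2$, while the $J$-contribution assembles into $-\tfrac{1}{2}J_i c(e^i)\sigma$. The main obstacle is precisely this bookkeeping: one must carefully track cross terms involving $k$, $\n k$, and $\sigma$ and exploit the Clifford and anti-commutation relations repeatedly so that all extraneous contributions cancel and the residual zeroth-order operator is exactly $\wt{\mc{R}} = \tfrac{1}{2}(\mu - J_i c(e^i)\sigma)$.
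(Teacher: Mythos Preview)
The paper does not supply its own proof of this proposition: it is stated with a citation to \cite[Proposition 4.6]{CW2022} and the reader is referred to \cite[Section 4.2]{CW2022} for the computations. There is therefore no in-paper argument to compare against.

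That said, your outline is correct and is exactly the standard route one finds in the cited source. The identities in (i) and (ii) follow immediately from the block descriptions of $\sigma$ and $c(\xi)$, the anti-commutation $\sigma c(\xi)=-c(\xi)\sigma$, the symmetry of $k$, and the pointwise self-adjointness of $\sigma c(e^j)$. For (iii), the Lichnerowicz formula gives the $D^2$ statement, and your plan for $\wt{D}^2$---expanding $c(e^i)c(e^j)\wt{\n}_{e_i}\wt{\n}_{e_j}$, isolating $\wt{\n}^*\wt{\n}$ via (ii), and collecting the curvature-type remainder---is the expected computation. The only place requiring real care, as you note, is the bookkeeping that turns the quadratic-in-$k$ terms into $\tfrac14\bigl((\mathrm{tr}_g k)^2-|k|^2\bigr)$ and the first-order-in-$\nabla k$ terms into $-\tfrac12 J_i c(e^i)\sigma$; once those are checked the residual zeroth-order term is precisely $\wt{\mc{R}}=\tfrac12(\mu-J_i c(e^i)\sigma)$.
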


In the following, the volume element will be omitted in the integral formula for convenience. The Green's formula for the Dirac operator $\wt{D}$ is then given by
$$
\int_{M}\langle\widetilde{D} u, v\rangle=\int_{M}\langle u, \widetilde{D} v\rangle+\int_{\partial M}\langle u, c(\nu^{b}) v\rangle,
$$
for any $u, v \in C_{c}^{\infty}(M, S)$. Here $\nu$ denotes the normal vector field pointing inward. We also have the following Green's formula for the connection $\wt{\n}$.
$$
\int_{M}\left\langle u, \widetilde{\nabla}^{*} \widetilde{\nabla} u\right\rangle=\int_{M}\langle\widetilde{\nabla} u, \widetilde{\nabla} u\rangle+\int_{\partial M}\left\langle u, \widetilde{\nabla}_{\nu} u\right\rangle.
$$
Let $M_r\subset M$ be a compact subset such that $M\backslash M_r$ is a non-empty disjoint union of finitely many asymptotically hyperbolic ends.
For any smooth section $u\in C^\infty(M,S)$, one has
\begin{align*}
\begin{split}
  \int_{M_r} |\wt{D}u|^2&=\int_{M_r} \left\langle u,\wt{D}^2u\right\rangle+\int_{\p {M_r}} \left\langle u,c(\nu^\flat)\wt{D}u\right\rangle\\
  &=\int_{M_r} \left\langle u,\wt{\n}^*\wt{\n}u\right\rangle+\int_{M_r} \left\langle u,\wt{\mc{R}}u\right\rangle+\int_{\p M_r} \left\langle u,c(\nu^\flat)\wt{D}u\right\rangle\\
  &=\int_{M_r} |\wt{\n}u|^2+\int_{M_r} \left\langle u,\wt{\mc{R}}u\right\rangle+\int_{\p {M_r}}\left\langle u,c(\nu^\flat)\wt{D}u+\wt{\n}_{\nu}u\right\rangle.
 \end{split}
\end{align*}
The boundary Dirac operator is defined as follows
\begin{align*}
\begin{split}
  \mc{A}:=\sum_{i=1}^{n-1}c^\p(e^i)\n^\p_{e_i},
 \end{split}
\end{align*}
where $e_n=-\nu$ and $c^\p(e^i)=c(e^i)c(\nu^\flat)$ and $\n^{\p}_{e_i}=\n_{e_i}+\frac{1}{2}c^\p(\n_{e_i}\nu^{\flat})$.
 Denote by $h$ the second fundamental form on $\p M$, then $\mr{tr}_{\p M}h=\sum_{i=1}^{n-1}\left\langle e_i,\n_{e_i}(-\nu)\right\rangle$. Thus
\begin{align*}
\begin{split}
  \mc{A}&=\frac{1}{2}\mr{tr}_{\p M}h-c(\nu^\flat) D-\n_{\nu}\\
  &=\frac{1}{2}\mr{tr}_{\p M}h-c(\nu^b)(\wt{D}-\frac{\mr{tr}_gk}{2}\sigma)-(\wt{\n}_{\nu}+\frac{1}{2}k_{j\nu}c({e}^j)\sigma)\\
  &=\frac{1}{2}\mr{tr}_{\p M}h+\frac{\mr{tr}_gk}{2}c(\nu^\flat)\sigma-\frac{1}{2}k_{\nu\nu}c(\nu^\flat)\sigma-\frac{1}{2}k_{a\nu}c( e^a)\sigma-(c(\nu^\flat)\wt{D}+\wt{\n}_{\nu})\\
  &=\frac{1}{2}\mr{tr}_{\p M}h+\frac{\mr{tr}_{\p M}k}{2}c(\nu^\flat)\sigma-\frac{1}{2}k_{a\nu}c( e^a)\sigma-(c(\nu^\flat)\wt{D}+\wt{\n}_{\nu}),
 \end{split}
\end{align*}
which follows that
\begin{align*}
\begin{split}
   \int_{M_r} |\wt{D}u|^2&=\int_{M_r} |\wt{\n}u|^2+\int_{M_r} \left\langle u,\wt{\mc{R}}u\right\rangle\\
   &\quad +\int_{\p M}\left\langle u,(\frac{1}{2}\mr{tr}_{\p M}h+\frac{\mr{tr}_{\p M}k}{2}c(\nu^\flat)\sigma-\frac{1}{2}k_{a\nu}c( e^a)\sigma-\mc{A})u\right\rangle\\
   &\quad +\int_{\p M_r\backslash \p M}\left\langle u,c(\nu^\flat)\wt{D}u+\wt{\n}_{\nu}u\right\rangle.
 \end{split}
\end{align*}
The Penrose operator of $\wt{\n}$ is defined as 
\begin{align*}
\begin{split}
  \wt{\mc{P}}_\xi u=\wt{\n}_\xi u+\frac{1}{n}c(\xi^\flat) \wt{D}u.
 \end{split}
\end{align*}
The Friedrich inequality is then 
\begin{align*}
\begin{split}
|\wt{\n}u|^2-\frac{1}{n}|\wt{D}u|^2 = |\wt{\mc{P}}u|^2\geq 0,
 \end{split}
\end{align*}
which follows that
\begin{align*}
\begin{split}
 &  \int_{M_r} |\wt{D}u|^2=\frac{n}{n-1}\int_{M_r} |\wt{\mc{P}}u|^2+\frac{n}{n-1}\int_{M_r} \left\langle u,\wt{\mc{R}}u\right\rangle\\
   &+\frac{n}{n-1}\int_{\p M}\left\langle u,(\frac{1}{2}\mr{tr}_{\p M}h+\frac{\mr{tr}_{\p M}k}{2}c(\nu^\flat)\sigma-\frac{1}{2}k_{a\nu}c( e^a)\sigma-\mc{A})u\right\rangle\\
   & \quad +\frac{n}{n-1}\int_{\p M_r\backslash \p M}\left\langle u,c(\nu^\flat)\wt{D}u+\wt{\n}_{\nu}u\right\rangle.
 \end{split}
\end{align*}
For any $\psi\in C^\infty_c(M,\mb{R})$, the Callias operator is defined as
\begin{equation*}
{  \mc{B}_{\psi}=\wt{D}+\psi \sigma=D+(\frac{1}{2}\mr{tr}_gk+\psi)\sigma}.
\end{equation*}
For any $u\in C^\infty(M,S)$, one has
\begin{multline*}
  \int_{M_r} | \mc{B}_{\psi}u|^2=\int_{M_r} (|\wt{D}u|^2+|\psi|^2|u|^2)\\
  +\int_{M_r}\left\langle u,(c(\mathrm{d}\psi)\sigma+\psi (\mr{tr}_gk) )u\right\rangle+\int_{\p M_r}\left\langle u,\psi c(\nu^b)\sigma(u)\right\rangle.
\end{multline*}
By taking $r$ sufficiently large, then $\psi=0$ on $\p M_r\backslash \p M$. Using  $\left\langle u, c(\mathrm{d}\psi)\sigma u\right\rangle\leq |\mathrm{d}\psi||u|^2$, so 
\begin{align*}
\begin{split}
   \int_{M_r} | \mc{B}_{\psi}u|^2\geq &\frac{n}{n-1}\int_{M_r} |\wt{\mc{P}}u|^2+\frac{n}{n-1}\int_{M_r} \left\langle u,\wt{\mc{R}}u\right\rangle\\
   & +\frac{n}{n-1}\int_{\p M}\left\langle u,(\frac{1}{2}\mr{tr}_{\p M}h+\frac{\mr{tr}_{\p M}k}{2}c(\nu^\flat)\sigma-\frac{1}{2}k_{a\nu}c( e^a)\sigma-\mc{A})u\right\rangle\\
   &+\int_{M_r} (|\psi|^2-|\mathrm{d}\psi|+\psi \mr{tr}_gk)|u|^2+\int_{\p M}\left\langle u,\psi c(\nu^b)\sigma(u)\right\rangle\\
   & +\frac{n}{n-1}\int_{\p M_r\backslash \p M}\left\langle u,c(\nu^\flat)\wt{D}u+\wt{\n}_{\nu}u\right\rangle.
 \end{split}
\end{align*}
Now we consider $u\in C^\infty(M,S;\chi)$ that satisfies the boundary condition $\chi(u|_{\p M})=u|_{\p M}$, then 
\begin{align*}
  \int_{M_r} | \mc{B}_{\psi}u|^2 &\geq \frac{n}{n-1}\int_{M_r} |\wt{\mc{P}}u|^2+\frac{n}{n-1}\int_{M_r} \left\langle u,\wt{\mc{R}}u\right\rangle\\
  &\quad + \int_{M_r} (|\psi|^2-|\mathrm{d}\psi|+\psi \mr{tr}_gk)|u|^2\\
  &\quad +\frac{n}{n-1}\int_{\p M}\left\langle u,(\frac{1}{2}\mr{tr}_{\p M}h+\frac{\mr{tr}_{\p M}k}{2}+\frac{n-1}{n}\psi)u\right\rangle\\
  &\quad +\frac{n}{n-1}\int_{\p M_r\backslash \p M}\left\langle u,c(\nu^\flat)\wt{D}u+\wt{\n}_{\nu}u\right\rangle.
\end{align*}
Since $\left\langle u,\wt{\mc{R}}u\right\rangle\geq \frac{1}{2}(\mu-|J|)|u|^2$, denote by $H_g:=\mr{tr}_{\p M}h$ the mean curvature of $\partial M$, so
\begin{align}\label{spectral1}
\begin{split}
&\quad \int_{M_r} | \mc{B}_{\psi}u|^2\\
&\geq  \frac{n}{n-1}  \int_{M_r} \left(\frac{n-1}{n}(|\psi|^2-|\mathrm{d}\psi|+\psi \mr{tr}_gk)+\frac{1}{2}(\mu-|J|)\right)|u|^2\\
  &\quad +\frac{n}{n-1}\int_{\p M}\left\langle u,(\frac{1}{2}H_g+\frac{\mr{tr}_{\p M}k}{2}+\frac{n-1}{n}\psi)u\right\rangle\\
  &\quad+\frac{n}{n-1}\int_{M_r} |\wt{\mc{P}}u|^2+ \frac{n}{n-1}\int_{\p M_r\backslash \p M}\left\langle u,c(\nu^\flat)\wt{D}u+\wt{\n}_{\nu}u\right\rangle.
  \end{split}
\end{align}
Similarly, if we do not use the Friedrich inequality, then
\begin{align}\label{spectral2}
\begin{split}
&\quad \int_{M_r} | \mc{B}_{\psi}u|^2\\
&\geq    \int_{M_r} \left((|\psi|^2-|\mathrm{d}\psi|+\psi \mr{tr}_gk)+\frac{1}{2}(\mu-|J|)\right)|u|^2\\
  &\quad +\int_{\p M}\left\langle u,(\frac{1}{2}H_g+\frac{\mr{tr}_{\p M}k}{2}+\psi)u\right\rangle\\
  &\quad+\int_{M_r} |\wt{\n}u|^2+ \int_{\p M_r\backslash \p M}\left\langle u,c(\nu^\flat)\wt{D}u+\wt{\n}_{\nu}u\right\rangle
  \end{split}
\end{align}
for any $u\in C^\infty(M,S;\chi)$.

\subsection{Mass formulas}

In this subsection, we will derive some formulas for the mass functional using \eqref{spectral1} and \eqref{spectral2}. 

In the hyperboloid model, the hyperbolic space $(\mathbb{H}^n,b)$ is the upper half of the 2-sheeted hyperboloid in the Minkowski spacetime. The second fundamental form of $(\mathbb{H}^n,b)$ is $\pm g$ with the sign depending on the choice of the orientation of the unit normal. So we consider $k=g$. Then
\begin{equation*}
  \wt{\n}_{e_i}=\n_{e_i}-\frac{1}{2}g_{ij}c(e^j)\sigma
\end{equation*}
and the Dirac operator is 
\begin{align*}
\wt{D}=D+\frac{n}{2}\sigma:C^\infty(M,S)\to C^\infty(M,S).
\end{align*}
In this case, $J=0$ and
\begin{equation*}
  \wt{R}=\frac{1}{2}\mu=\frac{R_g+n(n-1)}{4}.
\end{equation*}
Let $M_r$ be a compact subset such that $K\subset M_r\subset M$, and $M\backslash M_r$ is a union of finitely many asymptotically hyperbolic ends.
 By \eqref{spectral1}, for any $u\in C^\infty(M,S;\chi)$, one has
\begin{align*}
\begin{split}
  &\quad -\frac{n}{n-1}\int_{\p M_r\backslash \p M}	\left\langle u,c(\nu^\flat)\wt{D}u+\wt{\n}_{\nu}u\right\rangle
+\int_{M_r} | \mc{B}_{\psi}u|^2 \\
&\geq  \frac{n}{n-1}  \int_{M_r} \left(\frac{n-1}{n}(|\psi|^2-|\mathrm{d}\psi|+\psi n)+\frac{1}{4}(R_g+n(n-1))\right)|u|^2\\
  &\quad +\frac{n}{n-1}\int_{\p M}\left\langle u,(\frac{1}{2}H_g+\frac{n-1}{2}+\frac{n-1}{n}\psi)u\right\rangle +\frac{n}{n-1}\int_{M_r}|\wt{\mc{P}}u|^2.
   \end{split}
\end{align*}

Set
\begin{equation}\label{bartheta}
  \b{\theta}_\psi:=\frac{n}{n-1}\frac{R_g+n(n-1)}{4}+\psi^2-|d\psi|+n\psi
\end{equation}
and 
\begin{equation}\label{bareta}
   \b{\eta}_\psi:=\frac{n}{2(n-1)}H_g+\frac{n}{2}+\psi|_{\p M}.
\end{equation}

For any $u\in C^\infty(M,S;\chi)$ such that $\mc{B}_\psi u\in L^2(M,S)$,
by taking $r\to\infty$, one has
\begin{align}\label{eqn0}
\begin{split}
& \quad  -\frac{n}{n-1}\lim_{r\to\infty}\int_{\p M_r\backslash \p M}	\left\langle u,c(\nu^\flat)\wt{D}u+\wt{\n}_{\nu}u\right\rangle+\|\mc{B}_\psi u\|^2_{L^2(M,S)}\\
&\geq \frac{n}{n-1}\|\tilde{\mc{P}}u\|^2_{L^2(M,S)}+\int_M \b{\theta}_\psi |u|^2 dV+\int_{\p M}\b{\eta}_\psi|u|^2 dS.
 \end{split}
\end{align}
Similarly, by setting
\begin{equation}\label{theta}
  \theta_\psi=\frac{R_g+n(n-1)}{4}+\psi^2-|d\psi|+n\psi
\end{equation}
and 
\begin{equation}\label{eta}
\eta_\psi=\frac{H_g}{2}+\frac{n-1}{2}+\psi|_{\p M},
\end{equation}
then
\begin{align}
\begin{split}
 &\quad  -\lim_{r\to\infty}\int_{\p M_r\backslash \p M}	\left\langle u,c(\nu^\flat)\wt{D}u+\wt{\n}_{\nu}u\right\rangle+\|\mc{B}_\psi u\|^2_{L^2(M,S)}\\
&\geq \|\widetilde{\n}u\|^2_{L^2(M,S)}+\int_M {\theta}_\psi |u|^2 dV+\int_{\p M}{\eta}_\psi|u|^2 dS.
 \end{split}
\end{align}
From the definitions of $\wt{D}$ and $\wt{\n}$, one has
\begin{align*}
\begin{split}
  c(\nu^\flat)\wt{D}+\wt{\n}_{\nu} =c(\nu^\flat)D+\n_\nu+\frac{n-1}{2}c(\nu^\flat)\sigma.
 \end{split}
\end{align*}
We assume that $u=(u_1,u_2)\in C^\infty(M,S)= C^\infty(M,\s{S}\oplus\s{S})$, then 
\begin{align*}
& \quad ( c(\nu^\flat)\wt{D}+\wt{\n}_{\nu})(u)\\
&=\left((c(\nu^\flat)\s{D}+\n_\nu+\frac{n-1}{2}ic(\nu^\flat))u_1,(c(\nu^\flat)\s{D}+\n_\nu-\frac{n-1}{2}ic(\nu^\flat))u_2
\right).
\end{align*}

\begin{definition}
The Killing connections and the Killing Dirac operators are respectively defined as 
\begin{equation*}
  \n^{\pm}_X=\n_X\pm\frac{i}{2}c(X^\flat)
\end{equation*}
 and
 \begin{equation*}
  \s{D}^\pm:=c\circ \n^\pm=\s{D}\mp \frac{ni}{2},
\end{equation*}
for any vector field $X$. $u$ is called an imaginary Killing spinor if $\n^\pm u=0$, see e.g. \cite{CH2003, AL2022}.
\end{definition}
By the above definition, one has
\begin{equation*}
  c(\nu^\flat)\s{D}^\pm+\n^\pm_\nu=c(\nu^\flat)\s{D}+\n_{\nu}\mp \frac{n-1}{2}ic(\nu^\flat).
\end{equation*}
Hence 
\begin{align*}
\begin{split}
 &\quad \left\langle u, c(\nu^\flat)\wt{D}u+\wt{\n}_{\nu}u\right\rangle\\
 &=\left\langle u_1,c(\nu^\flat)\s{D}^-u_1+\n^-_\nu u_1\right\rangle+\left\langle u_2,c(\nu^\flat)\s{D}^+u_2+\n^+_\nu u_2\right\rangle.
 \end{split}
\end{align*}
Similarly, one has
\begin{align}\label{BD}
\begin{split}
  |\mc{B}_\psi u|^2&=|(\s{D}^-+\psi i)u_1|^2+|(\s{D}^+-\psi i)u_2|^2\\
  &=|\s{D}^-_\psi u_1|^2+|\s{D}^+_\psi u_2|^2,
  \end{split}
\end{align}
where $\s{D}^\pm_\psi:=\s{D}^\pm\mp\psi i$,
and
\begin{equation*}
  |\wt{\n}u|^2=|\n^- u_1|^2+|\n^+ u_2|^2.
\end{equation*}
In a word, we obtain
\begin{align}\label{eqn1}
\begin{split}
 &\quad  -\lim_{r\to\infty}\int_{\p M_r\backslash \p M}\left(\left\langle u_1,c(\nu^\flat)\s{D}^-u_1+\n^-_\nu u_1\right\rangle+\left\langle u_2,c(\nu^\flat)\s{D}^+u_2+\n^+_\nu u_2\right\rangle\right)\\
&\geq-\|\mc{B}_\psi u\|^2_{L^2(M,S)}+ \|\widetilde{\n}u\|^2_{L^2(M,S)}+\int_M {\theta}_\psi |u|^2 dV+\int_{\p M}{\eta}_\psi|u|^2 dS.
 \end{split}
\end{align}
\begin{remark}
	By taking $u_1=0$ or $u_2=0$, the above identity is equivalent to 
\begin{align*}
\begin{split}
 & -\lim_{r\to\infty}\int_{\p M_r\backslash \p M}\left\langle u_0,c(\nu^\flat)\s{D}^\pm u_0+\n^\pm_\nu u_0\right\rangle+\|\s{D}^\pm_\psi u_0\|^2_{L^2(M,\s{S})}\\
 &\geq \|\n^\pm u_0\|^2_{L^2(M,\s{S})}+\int_M {\theta}_\psi |u_0|^2 dV+\int_{\p M}{\eta}_\psi|u_0|^2 dS.
 \end{split}
\end{align*}
for any $u_0\in C^\infty(M,\s{S})$ satisfying $ic(\nu^\flat)u_0=\pm u_0$ and $\s{D}^\pm_\psi u_0\in L^2(M,\s{S})$. 
\end{remark}

Following  \cite[Remark 3.17]{AL2022}, the hyperbolic space $\mb{H}^n$ can be described in terms of Poincar\'e ball model 
\begin{equation*}
  \mb{B}^n=\{y\in\mb{R}^n; |y|_\delta<1\},\quad \widehat{b}=\Omega(y)^{-2}\delta,
\end{equation*}
where $\Omega(y)=(1-|y|^2_\delta)/2$. There is an identification between the spinor bundles $\mb{S}\mb{B}^n_\delta$ and $\mb{S}\mb{B}^n_{\widehat{b}}$. If $u_0\in C^\infty(\mb{B}^n,\mb{S}\mb{B}^n_\delta)$ is a $\n^\delta$-parallel spinor, then 
\begin{equation*}
  \Xi_{u_0,\pm}(y):=\Omega(y)^{-1/2}\o{(I\mp ic^\delta(y))u_0}
\end{equation*}
satisfying $\n^{\hat{b},\pm}\Xi_{u_0,\pm}=0$. Furthermore,
\begin{equation*}
  V^\pm_{u_0}:=|\Xi_{u_0,\pm}(y)|^2_b\in \mc{N}_b,
\end{equation*}
see \cite[Proposition 4.1]{AL2022}. Let $\mc{E}\subset M$ be an asymptotically hyperbolic end in $M$. We set
\begin{equation*}
  \Psi_{u_0,\pm}:=\chi_{\mc{E}}\Xi_{u_0,\pm}+\phi_{u_0,\pm},
\end{equation*}
where $\chi_{\mc{E}}$ is a cut-off function that vanishes outside the end $\mc{E}$ and is equal to $1$ in $(M\backslash M_r)\cap \mc{E}$ for sufficiently large $r$, $\phi_{u_0,\pm}$ are chosen  by the following lemma.
\begin{lemma}
There exists a $v=(\phi_{u_0,-},\phi_{u_0,+})\in L^2_1(M,S;\chi)$  such that 
\begin{equation}\label{harmonic spinor}
  \s{D}^\pm_\psi\Psi_{u_0,\pm}=\s{D}^\pm_\psi(\chi_{\mc{E}}\Xi_{u_0,\pm}+\phi_{u_0,\pm})=0.
\end{equation}
\end{lemma}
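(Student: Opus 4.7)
The plan is to rewrite the prescribed equation as $\mc{B}_\psi v = -\mc{B}_\psi w$ for $v = (\phi_{u_0,-}, \phi_{u_0,+})$ and $w = (\chi_{\mc{E}}\Xi_{u_0,-}, \chi_{\mc{E}}\Xi_{u_0,+})$, and then solve it by appealing to the Fredholm theory for the Callias operator with the chirality boundary condition $\chi$ that will be developed in Section \ref{APP}.

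First I would verify that $\mc{B}_\psi w \in L^2(M,S)$, so that the inhomogeneity has the regularity needed for the solvability result. Since $\psi$ has compact support and $\chi_{\mc{E}}\equiv 1$ outside a compact piece of $\mc{E}$, the contributions coming from $d\chi_{\mc{E}}$ and from the multiplication by $\psi\sigma$ are supported in a compact set. The remaining contribution is essentially $\s{D}^\pm\Xi_{u_0,\pm}$ computed with the metric $g$ in the far region of $\mc{E}$; this would vanish identically if $g$ were replaced by the hyperbolic background $b$, since $\Xi_{u_0,\pm}$ is $\n^{\hat b,\pm}$-parallel. The discrepancy involves one derivative of $g-b$, which by \eqref{decay rate} is $o(r^{-n/2})$; paired with the growth $|\Xi_{u_0,\pm}|_b = O(r^{1/2})$ inherited from $V^\pm_{u_0}\in \mc{N}_b$, this yields an $L^2$ tail with respect to the hyperbolic volume element. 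The exponent $o(r^{-n/2})$ in \eqref{decay rate} is precisely what makes this borderline integration work.

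Next, $w$ vanishes on $\p M$ because $\chi_{\mc{E}}$ is supported inside $\mc{E}$ and away from the compact boundary, so the chirality condition for $v+w$ is captured entirely by the requirement $v\in L^2_1(M,S;\chi)$. It then suffices to invoke the boundary-value theory for the Callias operator from Section \ref{APP} to produce a solution of $\mc{B}_\psi v = -\mc{B}_\psi w$ in that space. The expected output of that theory is that $(\mc{B}_\psi,\chi)$ is self-adjoint and has closed range equal to the orthogonal complement of its $L^2$-kernel; applying the Green's formula from Proposition \ref{prop1}, $-\mc{B}_\psi w$ pairs trivially with every element of $\ker\mc{B}_\psi$, and so lies in the range.

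The main obstacle I expect is establishing Fredholmness and closed range for $(\mc{B}_\psi,\chi)$ on a noncompact manifold whose topology outside $\mc{E}$ is arbitrary. Coercivity near infinity inside $\mc{E}$ comes from the imaginary-Killing shift $\tfrac{n}{2}\sigma$ already built into $\mc{B}_\psi$ (the usual Chru\'sciel--Herzlich mechanism); on any other ends of $M$ one instead has to choose $\psi$ so that $|\psi|^2-|d\psi|$ in \eqref{spectral1}--\eqref{spectral2} gives the Callias-type positivity of Cecchini--Zeidler \cite{CZ2021}. Combining these ingredients with the chirality boundary condition and deducing closed range is exactly the content of Section \ref{APP}, which is why the present lemma is designed to drop out as a direct corollary of that analysis.
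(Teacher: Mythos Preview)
Your plan is essentially the paper's proof: set $w=(\chi_{\mc{E}}\Xi_{u_0,-},\chi_{\mc{E}}\Xi_{u_0,+})$, verify $\mc{B}_\psi w\in L^2(M,S)$ (the paper simply cites \cite[(4.17)]{CH2003} for the decay computation you sketch), and then apply Proposition~\ref{BVP} from Section~\ref{APP} to solve $\mc{B}_\psi v=-\mc{B}_\psi w$ in $L^2_1(M,S;\chi)$.

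Two small corrections to your commentary. First, the orthogonality-to-kernel step is unnecessary and, as stated, not quite justified: your Green's formula argument would require controlling the boundary term at infinity for the non-$L^2$ section $w$, which you have not done. The paper sidesteps this entirely because Proposition~\ref{BVP} is proved under the standing hypotheses $\bar\theta_\psi\ge 0$, $\bar\eta_\psi\ge 0$ (implicit here and explicit in Theorem~\ref{thm1}), and these force $\ker\mc{B}_\psi=\{0\}$; surjectivity onto $L^2$ then comes from coercivity at infinity and \cite[Proposition~4.19]{GN2014}, not from a cokernel computation. Second, in the setting of this lemma $(M,g)$ is an asymptotically hyperbolic manifold with compact boundary, so \emph{every} end is asymptotically hyperbolic and coercivity at infinity is furnished uniformly by the $\tfrac{n}{2}\sigma$ shift; the Callias potential $\psi$ is not needed for coercivity here. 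The scenario you describe, where $\psi$ must supply positivity on non-hyperbolic ends, arises only later in Section~\ref{PMTAE} and is handled by the separate Proposition~\ref{BVP1}.
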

\begin{proof}
	For any $\n^\delta$-parallel spinor $u_0\in C^\infty(\mb{B}^n,\mb{S}\mb{B}^n_\delta)$, denote 
\begin{equation*}\label{eqn2}
  u_1=\Psi_{u_0,-},\quad u_2=\Psi_{u_0,+}, \quad u=(u_1,u_2).
\end{equation*}
From \eqref{BD}, then \eqref{harmonic spinor} is equivalent to 
$
  \mc{B}_\psi u=0.
$
Now we consider the following boundary value problem
\begin{equation*}
   \begin{cases}
\mc{B}_\psi u 	 =0,\\
\chi(u|_{\p M}) =u|_{\p M}.
 \end{cases}
\end{equation*}
Note that 
\begin{align*}
\begin{split}
  \mc{B}_\psi u&=(\s{D}^+_\psi \Psi_{u_0,+},\s{D}^-_\psi \Psi_{u_0,-})\\
  &=(\s{D}^+_\psi (\chi_\mc{E}\Xi_{u_0,+}+\phi_{u_0,+}),\s{D}^-_\psi (\chi_\mc{E}\Xi_{u_0,-}+\phi_{u_0,-}))\\
  &=\mc{B}_\psi(\chi_\mc{E}\Xi_{u_0,-},\chi_{\mc{E}}\Xi_{u_0,+})+\mc{B}_\psi(\phi_{u_0,-},\phi_{u_0,+}).
 \end{split}
\end{align*}
The same proof \cite[(4.17)]{CH2003} shows that $\s{D}^\pm_\psi(\chi_\mc{E}\Xi_{u_0,\pm})\in L^2(M,\s{S})$, so 
$$\mc{B}_\psi(\chi_\mc{E}\Xi_{u_0,-},\chi_\mc{E}\Xi_{u_0,+})\in L^2(M,S).$$ 
Using Proposition \ref{BVP}, there exists such a $v=(\phi_{u_0,-},\phi_{u_0,+})\in L^2_1(M,S;\chi)$ satisfying $\mc{B}_\psi v=-\mc{B}_\psi(\chi_\mc{E}\Xi_{u_0,-},\chi_\mc{E}\Xi_{u_0,+})$, so $\mc{B}_\psi u=0$. The proof is complete.
\end{proof}

Since $\psi$ has compact support, by \eqref{harmonic spinor}, so
\begin{equation*}
  \s{D}^\pm\Psi_{u_0,\pm}=\s{D}^\pm_\psi\Psi_{u_0,\pm}=0
\end{equation*}
on $M\backslash M_r$ for sufficiently large $r$. Hence
\begin{align*}
\begin{split}
 &\quad  -\lim_{r\to\infty}\int_{\p M_r\backslash \p M}\left\langle \Psi_{u_0,\pm},c(\nu^\flat)\s{D}^\pm \Psi_{u_0,\pm}+\n^\pm_\nu  \Psi_{u_0,\pm}\right\rangle\\
 &=  -\lim_{r\to\infty}\int_{(\p M_r\backslash \p M)\cap\mc{E}}\left\langle \Psi_{u_0,\pm},c(\nu^\flat)\s{D}^\pm \Psi_{u_0,\pm}+\n^\pm_\nu \Psi_{u_0,\pm}\right\rangle\\
  &=  -\lim_{r\to\infty}\int_{(\p M_r\backslash \p M)\cap\mc{E}}\left\langle \Psi_{u_0,\pm},\n^\pm_\nu \Psi_{u_0,\pm}\right\rangle\\
 &=\frac{1}{4}\mf{m}_\mathcal{E}(V_{u_0}^\pm),
 \end{split}
\end{align*}
where  the last equality follows from  \cite[(4.22)]{CH2003}.

By \eqref{eqn1}, one has
\begin{align}\label{eqn3}
\begin{split}
&\quad  \frac{1}{4}\mf{m}_\mathcal{E}(V_{u_0}^-)+\frac{1}{4}\mf{m}_\mathcal{E}(V_{v_0}^+)\\
  &\geq\|\widetilde{\n}u\|^2_{L^2(M,S)}+\int_M {\theta}_\psi |u|^2 dV+\int_{\p M}{\eta}_\psi|u|^2 dS,
 \end{split}
\end{align}
where $u=(u_1,u_2)=(\Psi_{u_0,-},\Psi_{v_0,+})$.
\begin{remark}
The above inequality is also equivalent to 
\begin{align*}
\begin{split}
  \frac{1}{4}\mf{m}_\mathcal{E}(V_{u_0}^\pm)&\geq-\|\n^\pm \Psi_{u_0,\pm}\|^2_{L^2(M,\s{S})}+\int_M {\theta}_\psi |\Psi_{u_0,\pm}|^2 dV+\int_{\p M}{\eta}_\psi|\Psi_{u_0,\pm}|^2 dS
 \end{split}
\end{align*}
by considering $u_0=0$ or $v_0=0$.
\end{remark}

Similarly, one has by \eqref{spectral2}
\begin{align}
\begin{split}
& \quad  \frac{n}{(n-1)}( \frac{1}{4}\mf{m}_\mathcal{E}(V_{u_0}^-)+\frac{1}{4}\mf{m}_\mathcal{E}(V_{v_0}^+))\\
&\geq \frac{n}{n-1}\|\tilde{\mc{P}}u\|^2_{L^2(M,S)}+\int_M \b{\theta}_\psi |u|^2 dV+\int_{\p M}\b{\eta}_\psi|u|^2 dS.
 \end{split}
\end{align}

If $\bar{\theta}_\psi\geq 0$ and $\bar{\eta}_\psi\geq 0$, then 
\begin{equation*}
  \mf{m}_\mathcal{E}(V^-_{u_0})\geq 0\text{ and }\mf{m}_\mathcal{E}(V^+_{v_0})\geq 0
\end{equation*}
for any $\n^\delta$-parallel spinors $u_0, v_0\in C^\infty(\mb{B}^n,\mb{S}\mb{B}^n_\delta)$. From \cite[Proposition 4.1]{AL2022}, one has 
\begin{equation*}
  V^\pm_{u_0}\in \mc{C}_b^\uparrow:=\{V\in\mc{N}_b|\eta(V,V)=0,\eta(V,V_{(0)})>0\}\subset \mc{N}_b
\end{equation*}
if $V^\pm_{u_0}\not\equiv 0$,  and moreover any $V\in\mc{C}_b^\uparrow$ can be written as $V=V^\pm_{u_0}$ for some such $u_0$. Hence the mass functional $\mf{m}_\mathcal{E}$ is non-negative on the future-pointing isotropic cone $\mc{C}^\uparrow_b\subset\mc{N}_b$. Standard considerations in Lorentzian geometry yield that the linear functional $\mf{m}_\mathcal{E}$ is timelike future-directed or zero.

 In a word, we obtain
\begin{theorem}\label{thm1}
	Let $(M,g)$ be a complete asymptotically hyperbolic spin manifold with compact boundary and let $\psi\in C^\infty_c(M,\mb{R})$. Suppose that $\b{\theta}_\psi\geq 0$ and $\b{\eta}_\psi\geq 0$. Then the mass functional $\mf{m}_\mathcal{E}$ of each end is timelike  future-directed or zero on each end. If $\b{\theta}_\psi(x)>0$ or $\b{\eta}_\psi(x)>0$ for some point $x\in M$ or $x\in \p M$ respectively, then  the mass functional $\mf{m}_\mathcal{E}$ of each end is timelike future-directed.
\end{theorem}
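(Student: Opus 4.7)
The proof should assemble the ingredients already developed in the section. The plan is to apply the Friedrich-enhanced inequality (the one featuring $\tilde{\mc{P}}$, $\bar\theta_\psi$, and $\bar\eta_\psi$) to the spinor $u=(\Psi_{u_0,-},\Psi_{v_0,+})$ constructed from $\nabla^\delta$-parallel spinors $u_0,v_0\in C^\infty(\mb{B}^n,\mb{S}\mb{B}^n_\delta)$ via the cut-off plus $L^2$-correction scheme of the preceding lemma. Because $\mc{B}_\psi u=0$ by construction and the boundary condition $\chi(u|_{\p M})=u|_{\p M}$ holds, the inequality collapses to
\begin{equation*}
\tfrac{1}{4}\bigl(\mf{m}_\mathcal{E}(V_{u_0}^-)+\mf{m}_\mathcal{E}(V_{v_0}^+)\bigr)\ \geq\ \tfrac{n-1}{n}\Bigl(\tfrac{n}{n-1}\|\tilde{\mc{P}}u\|_{L^2}^2+\int_M\bar\theta_\psi|u|^2+\int_{\p M}\bar\eta_\psi|u|^2\Bigr).
\end{equation*}
Under the hypothesis $\bar\theta_\psi\geq 0$ and $\bar\eta_\psi\geq 0$, the right-hand side is non-negative, and specializing to $v_0=0$ or $u_0=0$ yields $\mf{m}_\mathcal{E}(V^\pm_{u_0})\geq 0$ for every parallel spinor $u_0$.

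Next, I would invoke \cite[Proposition 4.1]{AL2022}: the map $u_0\mapsto V^\pm_{u_0}$ sweeps out the entire future-pointing null cone $\mc{C}^\uparrow_b\subset\mc{N}_b$. Thus $\mf{m}_\mathcal{E}$ is a linear functional on $(\mc{N}_b,\eta)$ that is non-negative on $\mc{C}^\uparrow_b$. A standard Lorentzian lemma (a linear form non-negative on the future-pointing null cone of a Lorentz inner product is either zero or causal future-directed; if it is strictly positive on the cone minus the origin, it is timelike future-directed) then delivers the first conclusion of the theorem.

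For the strict conclusion, suppose $\bar\theta_\psi(x_0)>0$ at some interior point (the boundary case being entirely analogous with $\bar\eta_\psi$). For any non-trivial $u_0$, the spinor $\Psi_{u_0,\pm}$ coincides with $\Xi_{u_0,\pm}$ outside a large compact set of the end $\mc{E}$, and $|\Xi_{u_0,\pm}|^2_b=V^\pm_{u_0}\not\equiv 0$ is nowhere-vanishing near infinity. If $\mf{m}_\mathcal{E}(V^\pm_{u_0})$ vanished, the mass inequality would force $|\Psi_{u_0,\pm}|\equiv 0$ on a neighborhood of $x_0$, so unique continuation for the Dirac-type operator $\s{D}^\pm_\psi$ (a zero-order perturbation of a Dirac operator) would extend the vanishing to all of $M$, contradicting the non-vanishing at infinity. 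Hence $\mf{m}_\mathcal{E}$ is strictly positive on $\mc{C}^\uparrow_b\setminus\{0\}$, and the Lorentzian lemma upgrades the conclusion to timelike future-directed.

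The substantive analytic input — existence of the correction $\phi_{u_0,\pm}$ through Proposition~\ref{BVP} and the decay/integrability of $\s{D}^\pm_\psi(\chi_\mc{E}\Xi_{u_0,\pm})$ — has already been handled; the chief remaining obstacle is the unique continuation step, which I would justify by appealing to Aronszajn-type results for Dirac operators with smooth potential, ensuring the strong unique continuation property holds for $\s{D}^\pm_\psi$.
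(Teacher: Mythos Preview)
Your argument is correct and, for the first conclusion, follows the paper's reasoning essentially verbatim: the Friedrich-enhanced inequality applied to $u=(\Psi_{u_0,-},\Psi_{v_0,+})$ gives non-negativity of $\mf{m}_\mathcal{E}$ on $\mc{C}^\uparrow_b$, and the Lorentzian lemma finishes it. The paper does not spell out the strict conclusion at all; your unique-continuation argument fills that gap and is valid.

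That said, you are working harder than necessary for the strict part. The mass inequality you invoke carries the Penrose term $\|\tilde{\mc{P}}u\|^2_{L^2}$ on the right-hand side, and if $\mf{m}_\mathcal{E}(V^-_{u_0})=0$ you get not only $|u_1|=0$ near $x_0$ but also $\tilde{\mc{P}}u=0$ globally. Combined with $\mc{B}_\psi u=0$ (i.e.\ $\s{D}^- u_1=-i\psi u_1$), the Penrose equation reduces to the first-order linear system
\[
\n^-_\xi u_1 \;=\; \tfrac{i\psi}{n}\,c_{\s{S}}(\xi^\flat)\,u_1 \qquad\text{for all }\xi,
\]
so vanishing of $u_1$ at a single point propagates along curves to all of $M$ by elementary ODE uniqueness. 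This bypasses Aronszajn-type results entirely and is the argument implicit in the paper's setup (compare the reasoning inside the proof of Proposition~\ref{BVP}). Your approach via strong unique continuation for Dirac operators works too, but it discards the extra information contained in $\tilde{\mc{P}}u=0$.

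One small imprecision: you write that $\Psi_{u_0,\pm}$ ``coincides with $\Xi_{u_0,\pm}$'' outside a large compact set; in fact $\Psi_{u_0,\pm}=\Xi_{u_0,\pm}+\phi_{u_0,\pm}$ there, with $\phi_{u_0,\pm}\in L^2$. Since $|\Xi_{u_0,\pm}|$ grows at infinity while the correction is $L^2$, $\Psi_{u_0,\pm}$ is still not identically zero, so your contradiction goes through---just state it this way.
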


\section{The neck of AH manifolds}\label{distance}

Following the proof of \cite[Theorem C]{CZ2021}, we will give a similar result in the hyperbolic setting. More precisely, we will define a function $\Psi$ using explicit distance estimates, the mass can be proved to be timelike future-directed if the mean curvature of the boundary is bounded from below by $-\Psi$. 

For any $\kappa\in(0,1)$, we consider the following ordinary differential equation
\begin{equation*}
  \kappa\frac{n^2}{4}+y(t)^2-y'(t)+ny(t)=0,
\end{equation*}
which has a solution 
\begin{equation*}
  y(t)=-\frac{n}{2}\left(1+\sqrt{1-\kappa}\coth(\frac{n}{2}\sqrt{1-\kappa}t)\right)
\end{equation*}
with $y'(t)>0$. Now we consider the function $y=y(t)$ defined on the interval  $(-\infty,0)$. Denote 
\begin{equation}\label{t0}
  t_0:=\frac{2}{n\sqrt{1-\kappa}}\mr{arccoth}(-\frac{1}{\sqrt{1-\kappa}})<0.
\end{equation}
Note that $t_0\to-\infty$ as $\kappa\to 0$. By the definition of $t_0$, $y(t_0)=0$. Let $\delta>0$ be a constant such that
\begin{equation*}
  t_0+\delta=\frac{2}{n\sqrt{1-\kappa}}\mr{arccoth}(-\frac{1}{\sqrt{1-\kappa}})+\delta<0,
\end{equation*}
which is equivalent to
\begin{equation*}
 \sqrt{1-\kappa} \coth(\frac{n}{2}\sqrt{1-\kappa}\delta)>1.
\end{equation*}
Set
\begin{align}\label{lambda}
\begin{split}
  \lambda(\delta)&:=-\frac{n}{2}\left(1+\sqrt{1-\kappa}\coth(\frac{n}{2}\sqrt{1-\kappa}(\delta+t_0))\right)\\
  &=\frac{n}{2}\cdot\frac{2-\kappa}{\sqrt{1-\kappa}\coth(\frac{n}{2}\sqrt{1-\kappa}\delta)-1}.
 \end{split}
\end{align}

Similar to \cite[Lemmas 3.2, 3.3]{CZ2021}, we have the following two lemmas.
\begin{lemma}\label{lemma:fun1}
  Let $N$ be a compact manifold with boundary such that $\p N=\p_-N\sqcup \p_+N$ where $\p_\pm N$ are non-empty unions of components.
Let $\kappa,\delta$ be positive constants such that $0<\kappa<1$ and $\delta+t_0<0$.
 Then there exists a smooth function 
 $$p:N\to [0,\lambda(\delta)]$$
  such that $p=0$ in a neighborhood of $\p_-N$, $p=\lambda(\delta)$ in a neighborhood of $\p_+N$ and $\kappa\frac{n^2}{4}+p^2-|dp|+np\geq 0$.

\end{lemma}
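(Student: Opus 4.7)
The plan is to reduce the construction of $p$ to a one-dimensional profile: compose the ODE solution $y(t)$ on the interval $[t_0,t_0+\delta]$ with a suitable smooth auxiliary function $\rho\colon N\to [0,\delta]$. The ODE $\kappa\tfrac{n^2}{4}+y^2-y'+ny=0$ is engineered precisely so that $p:=y(t_0+\rho)$ will satisfy the desired inequality automatically once $|d\rho|\le 1$ pointwise; everything then reduces to producing such a $\rho$ interpolating between the two boundary components.

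First I would record the relevant properties of $y$ on $[t_0,t_0+\delta]$. Since $\delta+t_0<0$ by hypothesis, this interval lies in $(-\infty,0)$, where $y$ is smooth. A direct differentiation of the explicit formula gives
\[
y'(t)=\tfrac{n^2(1-\kappa)}{4}\,\mr{csch}^2\!\bigl(\tfrac{n\sqrt{1-\kappa}}{2}t\bigr)>0,
\]
so $y$ is strictly increasing on $[t_0,t_0+\delta]$, while the definitions \eqref{t0} and \eqref{lambda} give $y(t_0)=0$ and $y(t_0+\delta)=\lambda(\delta)$.

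Next I would construct a smooth function $\rho\colon N\to[0,\delta]$ with $|d\rho|\le 1$, equal to $0$ on a neighborhood of $\partial_-N$ and to $\delta$ on a neighborhood of $\partial_+N$. Under the distance assumption $\mr{dist}_N(\partial_-N,\partial_+N)\ge\delta$ (implicit in the intended application, cf.\ \cite[Lemma~3.2]{CZ2021}), the function $\min(\mr{dist}(\cdot,\partial_-N),\delta)$ is a $1$-Lipschitz interpolation between the required boundary values, and mollifying it inside a collar while leaving the already-constant regions fixed produces the required smooth $\rho$. Setting $p:=y(t_0+\rho)$ then gives the correct boundary behaviour, and since
\[
|dp|=y'(t_0+\rho)\,|d\rho|\le y'(t_0+\rho),
\]
one immediately obtains
\[
\kappa\tfrac{n^2}{4}+p^2-|dp|+np\;\ge\;\kappa\tfrac{n^2}{4}+y^2-y'+ny\;=\;0,
\]
as desired.

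The main technical obstacle is the mollification step: one must smooth $\min(\mr{dist}(\cdot,\partial_-N),\delta)$ without enlarging its Lipschitz constant and without perturbing the collar neighborhoods where it is already constant. This is handled by a standard partition-of-unity mollification written in exponential coordinates along $\partial_\pm N$, so the essence of the lemma lies in the ODE design rather than in the geometric construction of $\rho$.
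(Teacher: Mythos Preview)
Your approach is essentially the paper's: compose the ODE solution $y$ with a smooth $1$-Lipschitz auxiliary function on $N$, using the (tacitly assumed, as you note) strict inequality $\mr{dist}_g(\p_-N,\p_+N)>\delta$. The only difference is where the flattening near $\p_\pm N$ is arranged. The paper takes a smooth $1$-Lipschitz $x:N\to[t_0-\epsilon,\,t_0+\delta+\epsilon]$ with the prescribed endpoint values on $\p_\pm N$ (but not necessarily constant near them), and then modifies the one-dimensional profile $y$ to a function $p_\epsilon$ that is constant on $[t_0-\epsilon,t_0]$ and on $[t_0+\delta,t_0+\delta+\epsilon]$ while still satisfying $\kappa\tfrac{n^2}{4}+p_\epsilon^2-p'_\epsilon+np_\epsilon\ge0$; the desired $p$ is then $p_\epsilon\circ x$. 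You instead keep the profile unmodified and make $\rho$ itself constant near the boundary. Both routes work, but one small imprecision in your description: $\min(\mr{dist}(\cdot,\p_-N),\delta)$ is \emph{not} constant on any neighborhood of $\p_-N$, so there is no ``already-constant region'' there to leave fixed by mollification; you genuinely need the slack $\epsilon$ in the distance to insert a plateau near $\p_-N$ while keeping $|d\rho|\le1$. The paper's choice to flatten the one-dimensional profile rather than the Lipschitz function on $N$ sidesteps this small awkwardness.
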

\begin{proof}
As $\mr{dist}_g(\p_-N,\p_+N)>\delta$, there exists a smooth $1$-Lipschitz function $x:N\to\mb{R}$ such that $x|_{\p_-N}=-\epsilon+t_0$ and $x|_{\p_+N}=\delta+t_0+\epsilon$, for some $\epsilon>0$. Consider $\wt{p}(t)=-\frac{n}{2}\left(1+\sqrt{1-\kappa}\coth(\frac{n}{2}\sqrt{1-\kappa}t)\right)$, then 
	\begin{equation*}
  \kappa\frac{n^2}{4}+\wt{p}^2-\wt{p}'+n\wt{p}=0,
\end{equation*}
and $\wt{p}(t_0)=0$ and $\wt{p}(\delta+t_0)=\lambda(\delta)$. By a slightly modifying $\wt{p}$, we obtain a smooth function ${p}_\epsilon:[-\epsilon+t_0,\delta+t_0+\epsilon]\to[0,\lambda(\delta)]$ such that ${p}_\epsilon=0$ in a neighborhood of $-\epsilon+t_0$, $p_\epsilon=\lambda(\delta)$ in a neighborhood of $\delta+t_0+\epsilon$, and $ \kappa\frac{n^2}{4}+{p}_\epsilon^2-{p}'_\epsilon+n{p}_\epsilon\geq 0$. Finally, by setting $p:=p_\epsilon\circ x$, we obtain a smooth function on $N$.
\end{proof}

\begin{lemma}\label{lemma:fun2}
	Let $N$ be a compact manifold with boundary such that $\p N=\p_-N\sqcup \p_+N$ where $\p_\pm N$ are non-empty unions of components. Let $\lambda>0$ be arbitrary and suppose that $\mr{dist}_g(\p_-N,\p_+ N)>\delta$ for some $\delta\in (0,\frac{1}{n}\log(1+\frac{n}{\lambda}))$. Then there exists a smooth function $h:N\to [\lambda,\infty)$ such that $h=\lambda$ in a neighborhood of $\p_-N$, $h=\frac{n}{(\frac{n}{\lambda}+1)e^{-n\delta}-1}$ in a neighborhood of $\p_+ N$ and $h^2-|dh|+nh\geq 0$.
\end{lemma}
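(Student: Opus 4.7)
The argument follows the same template as Lemma \ref{lemma:fun1}, with the ODE and its initial condition adjusted.

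First, I would identify the relevant one-variable problem, namely $y'=y^2+ny$ with $y(0)=\lambda$. Separating variables and integrating yields the explicit solution
\begin{equation*}
  y(t)=\frac{n}{(\tfrac{n}{\lambda}+1)e^{-nt}-1},
\end{equation*}
which is smooth, positive and strictly increasing on its maximal interval of existence $[0,\tfrac{1}{n}\log(1+\tfrac{n}{\lambda}))$. By hypothesis $\delta$ lies strictly inside this interval, so $y$ is smooth on $[0,\delta]$, satisfies $y^2-y'+ny\equiv 0$ there, and direct substitution gives $y(\delta)=\frac{n}{(\frac{n}{\lambda}+1)e^{-n\delta}-1}$, which is exactly the target value at $\p_+N$.

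Second, since $\mr{dist}_g(\p_-N,\p_+N)>\delta$, I can pick $\epsilon>0$ so small that the strict inequality still holds with $\delta$ replaced by $\delta+2\epsilon$, and then produce a smooth $1$-Lipschitz function $x:N\to\mb{R}$ with $x|_{\p_-N}\equiv -\epsilon$ and $x|_{\p_+N}\equiv \delta+\epsilon$ (for instance a truncated, smoothed signed distance to $\p_-N$).

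Third, I would pass from $y$ to a smooth one-variable function $p_\epsilon:[-\epsilon,\delta+\epsilon]\to[\lambda,y(\delta)]$ which equals $\lambda$ near $-\epsilon$, equals $y(\delta)$ near $\delta+\epsilon$, and still satisfies $p_\epsilon^2-p_\epsilon'+np_\epsilon\geq 0$ with $p_\epsilon'\geq 0$ throughout. The natural candidate is the piecewise function obtained by extending $y$ by the constants $\lambda$ and $y(\delta)$ outside $[0,\delta]$; it is $C^0$ and piecewise smooth, failing $C^1$ only at the two junctions. On the constant stretches the inequality holds strictly with a definite margin ($\lambda^2+n\lambda>0$ on the left, $y(\delta)^2+ny(\delta)>0$ on the right), so rounding off the corners on a sufficiently narrow neighborhood produces the desired $p_\epsilon$. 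This is the same ``slight modification'' step invoked at the end of the proof of Lemma \ref{lemma:fun1}. Setting $h:=p_\epsilon\circ x$, the $1$-Lipschitz property of $x$ together with $p_\epsilon'\geq 0$ gives $|dh|\leq p_\epsilon'(x)$, so
\begin{equation*}
  h^2-|dh|+nh\geq p_\epsilon(x)^2-p_\epsilon'(x)+np_\epsilon(x)\geq 0,
\end{equation*}
and the required values of $h$ near $\p_\pm N$ are inherited from those of $p_\epsilon$ and $x$.

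The main obstacle is the corner-rounding in the third step: a naive mollification of the piecewise extension does not obviously preserve $p_\epsilon^2-p_\epsilon'+np_\epsilon\geq 0$ because $F(y)=y^2+ny$ is convex, so Jensen's inequality points the wrong way relative to the ODE $y'=F(y)$. I would remedy this by trading a small piece of the slack distance $2\epsilon$ for a strict inequality---starting the ODE from a value $\lambda+\eta$ and tapering back down to $\lambda$ inside the left constant region over a distance much larger than the eventual mollifier width, with the symmetric construction on the right---after which any sufficiently fine smoothing preserves the differential inequality with room to spare.
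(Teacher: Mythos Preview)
Your proof is correct and follows essentially the same route as the paper's: construct the explicit ODE solution $\tilde h(t)=\frac{n}{(\frac{n}{\lambda}+1)e^{-nt}-1}$, pull it back along a smooth $1$-Lipschitz function $x:N\to\mb{R}$, and then smooth near the endpoints. The paper's argument is terser---it computes $(\tilde h\circ x)^2-|d(\tilde h\circ x)|+n(\tilde h\circ x)=\tilde h'(x)\,(1-|dx|)\geq 0$ in one line and dismisses the endpoint modification as ``slightly modify $\tilde h$''---whereas you correctly flag the corner-rounding as the only genuinely delicate step and sketch a legitimate fix.
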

\begin{proof}
	As $\mr{dist}_g(\p_-N,\p_+N)>\delta$, there exists a smooth $1$-Lipschitz function $x:N\to\mb{R}$ such that $x|_{\p_-N}=-\epsilon+t_0$ and $x|_{\p_+N}=\delta+t_0+\epsilon$, for some $\epsilon>0$. Now we take
	\begin{equation*}
  \tilde{h}(t)=\frac{n}{(\frac{n}{\lambda}+1)e^{-nt}-1},\quad t\in [0,\frac{1}{n}\log(1+\frac{n}{\lambda})),
\end{equation*}
then 
\begin{align*}
\begin{split}
  (\tilde{h}\circ x)^2-|d(\tilde{h}\circ x)|+n(\tilde{h}\circ x)=\frac{n^2(\frac{n}{\lambda}+1)e^{-nt}}{((\frac{n}{\lambda}+1)e^{-nt}-1)^2}(1-|dx|)\geq 0.
 \end{split}
\end{align*}
By slightly modify to $\tilde{h}$, we complete the proof. 
\end{proof}

\begin{theorem}\label{thm:main2}
Let $(M,g)$ be an $n$-dimensional complete asymptotically hyperbolic	 spin manifold of  $R_g\geq -n(n-1)$ with compact boundary. Let $M_0\subseteq M_1\subseteq M$ be codimension zero submanifolds with boundary such that $M_0$ contains all asymptotically hyperbolic ends of $M$. Moreover, we assume that $R_g\geq (-1+\kappa)n(n-1)$ for some $0<\kappa<1$ on $M_1\backslash M_0$. We let $d=\mr{dist}_g(\p M_0,\p M_1)$ and $l=\mr{dist}_g(\p M_1,\p M)$ and define
\begin{align*}
\begin{split}
  \Psi(d,l):= \begin{cases}
 	\frac{2(n-1)}{(\frac{n}{\lambda(d)}+1)e^{-nl}-1},& \text{if }d<-t_0\text{ and }l<\frac{1}{n}\log(1+\frac{n}{\lambda(d)})\\
 	\infty&\text{otherwise}
 \end{cases}
 \end{split}
\end{align*}
 where $\lambda(d)$ is defined by \eqref{lambda} and
\begin{equation*}
  \frac{n}{\lambda(d)}+1=\frac{2(\sqrt{1-\kappa}\coth(\frac{n}{2}\sqrt{1-\kappa}d))-\kappa}{2-\kappa}.
\end{equation*}
 In this situation, if the mean curvature of $\p M$ satisfies 
 \begin{equation*}
  H_g+(n-1)>-\Psi(d,l)\text{ on } \p M,
\end{equation*}
then $\mf{m}_\mathcal{E}$ is timelike future-directed on each end of $M$.
\end{theorem}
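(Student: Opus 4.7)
The plan is to apply Theorem~\ref{thm1} by constructing an appropriate function $\psi\in C^\infty_c(M,\mb{R})$ for which $\b{\theta}_\psi\geq 0$ on $M$ and $\b{\eta}_\psi>0$ on $\p M$. If $\Psi(d,l)=\infty$ the condition on $H_g$ is vacuous, so I may assume $d<-t_0$ and $l<\frac{1}{n}\log(1+n/\lambda(d))$, in which case $\lambda(d)>0$ and $h_0:=\frac{n}{(n/\lambda(d)+1)e^{-nl}-1}>\lambda(d)$.

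The function $\psi$ is built in three pieces. On $M_0$ set $\psi\equiv 0$. On the neck $M_1\setminus M_0$ apply Lemma~\ref{lemma:fun1} with the given $\kappa$ and a parameter $\delta<d$ to $N=M_1\setminus M_0$ with $\p_-N=\p M_0$, $\p_+N=\p M_1$, obtaining a smooth $p$ interpolating from $0$ near $\p M_0$ to $\lambda(\delta)$ near $\p M_1$. On the remaining region $M\setminus M_1$ apply Lemma~\ref{lemma:fun2} with $\lambda=\lambda(\delta)$ and a parameter $\mu<l$ to $N=M\setminus M_1$ with $\p_-N=\p M_1$, $\p_+N=\p M$, obtaining a smooth $h$ equal to $\lambda(\delta)$ near $\p M_1$ and equal to $\frac{n}{(n/\lambda(\delta)+1)e^{-n\mu}-1}$ near $\p M$. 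Because each piece is locally constant at the two interfaces, the three pieces glue into a smooth function $\psi$ on $M$; since $M_0$ contains every asymptotically hyperbolic end and $\p M$ is compact, the closure $\overline{M\setminus M_0}$ is compact, so $\psi\in C^\infty_c(M,\mb{R})$.

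Next I would verify the positivity of $\b{\theta}_\psi$ piecewise, using \eqref{bartheta}. On $M_0$ the bound $R_g\geq-n(n-1)$ gives $\b{\theta}_\psi=\frac{n}{n-1}\frac{R_g+n(n-1)}{4}\geq 0$. On $M_1\setminus M_0$ the stronger bound $R_g\geq(-1+\kappa)n(n-1)$ yields $\frac{n}{n-1}\frac{R_g+n(n-1)}{4}\geq\frac{\kappa n^2}{4}$, so
\begin{equation*}
\b{\theta}_\psi\geq \tfrac{\kappa n^2}{4}+p^2-|dp|+np\geq 0
\end{equation*}
by Lemma~\ref{lemma:fun1}. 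On $M\setminus M_1$, using $R_g\geq -n(n-1)$ and Lemma~\ref{lemma:fun2},
\begin{equation*}
\b{\theta}_\psi\geq h^2-|dh|+nh\geq 0.
\end{equation*}
At the boundary $\psi|_{\p M}$ equals the constant value produced by Lemma~\ref{lemma:fun2}, and by \eqref{bareta}
\begin{equation*}
\b{\eta}_\psi=\frac{n}{2(n-1)}\bigl(H_g+(n-1)\bigr)+\psi|_{\p M}.
\end{equation*}
By continuity of the expression $\frac{2(n-1)}{(n/\lambda(\delta)+1)e^{-n\mu}-1}$ in $(\delta,\mu)$ and the strict inequality $H_g+(n-1)>-\Psi(d,l)$, I may fix $\delta<d$ and $\mu<l$ close enough to $d$ and $l$ that $H_g+(n-1)>-\frac{2(n-1)}{n}\psi|_{\p M}$ still holds pointwise on the compact set $\p M$, which forces $\b{\eta}_\psi>0$. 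Theorem~\ref{thm1} then yields that $\mf{m}_\mc{E}$ is timelike future-directed for each end.

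The main obstacle is the smooth assembly of $\psi$ at the two interfaces $\p M_0$ and $\p M_1$, which is precisely why Lemmas~\ref{lemma:fun1} and~\ref{lemma:fun2} are stated with the interpolating functions \emph{locally constant} near the boundary components rather than merely attaining the prescribed boundary values; the companion technical point is the strict-inequality formulation of those lemmas, addressed by the continuity/perturbation argument above. Everything else is bookkeeping: one checks that the three pieces agree in value on overlapping collar neighborhoods and that $\supp\psi\subseteq \overline{M\setminus M_0}$, which is compact because $M$ has compact boundary and all ends lie inside $M_0$.
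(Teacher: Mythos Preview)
Your approach is essentially the same as the paper's: build $\psi$ piecewise as $0$ on $M_0$, the function $p$ from Lemma~\ref{lemma:fun1} on $M_1\setminus M_0$, and the function $h$ from Lemma~\ref{lemma:fun2} on $M\setminus M_1$; verify $\bar\theta_\psi\geq 0$ region by region; and use a continuity/perturbation argument in the parameters to secure $\bar\eta_\psi>0$ before invoking Theorem~\ref{thm1}. The paper does exactly this, choosing $d'<d$ and $l'<l$ (your $\delta$ and $\mu$) up front.

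The one slip is your treatment of the case $\Psi(d,l)=\infty$. Saying ``the condition on $H_g$ is vacuous'' only means the hypothesis is automatically satisfied; it does not let you assume $d<-t_0$ and $l<\tfrac{1}{n}\log(1+n/\lambda(d))$, and the conclusion still has to be proved. The fix is already implicit in your construction: even when $d\geq -t_0$ or $l\geq\tfrac{1}{n}\log(1+n/\lambda(d))$, one may choose $\delta\in(0,\min(d,-t_0))$ and then $\mu\in\bigl(0,\min(l,\tfrac{1}{n}\log(1+n/\lambda(\delta)))\bigr)$ so that $\psi|_{\partial M}=\tfrac{n}{(n/\lambda(\delta)+1)e^{-n\mu}-1}$ is as large as one likes, forcing $\bar\eta_\psi>0$ regardless of $H_g$. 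The paper handles both cases uniformly by first selecting $d'<d$, $l'<l$ with $d'<-t_0$, $l'<\tfrac{1}{n}\log(1+n/\lambda(d'))$, and $H_g+(n-1)>-\Psi(d',l')>-\infty$, and only then constructing $\psi$.
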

\begin{proof}
\begin{figure}[ht]
\centering
\includegraphics[width=0.5\textwidth]{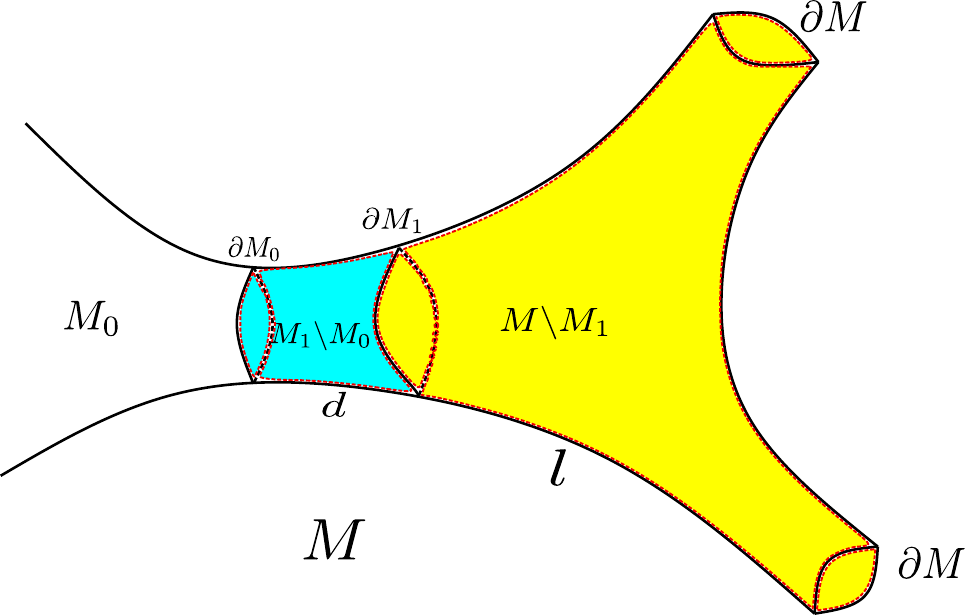}
\caption{$M_0\subseteq M_1\subseteq M$.}
\end{figure}
By continuity, we can find $d'<d$ and $l'<l$ such that $d'<-t_0$, $l'<\frac{1}{\lambda(d')}$ and $H_g+(n-1)>-\Psi(d',l')>-\infty$. Since 
\begin{equation*}
  \mr{dist}_g(\p M_0,\p M_1)=d>d',
\end{equation*}
by Lemma \ref{lemma:fun1},
there exists a smooth function $p:M_1\backslash M_0\to[0,\lambda(d')]$ such that $p=0$ in a neighborhood of $\p M_0$, $p=\lambda(d')$ in a neighborhood of $\p M_1$ and 
\begin{equation*}
  \kappa\frac{n^2}{4}+p^2-|dp|+np\geq 0.
\end{equation*}
Since $\mr{dist}_g(\p M_1,\p M)=l>l'$, by Lemma \ref{lemma:fun2}, there exists a smooth function $h:M\backslash M_1\to [\lambda(d'),\infty)$ such that $h=\lambda(d')$ in a neighborhood of $\p M_1$, 
$$h|_{\p M}=\frac{n}{(\frac{n}{\lambda(d')}+1)e^{-nl'}-1}=\frac{n}{2(n-1)}\Psi(d',l')$$
and 
$$h^2-|dh|+nh\geq 0.$$
Finally, let $\psi\in C^\infty_c(M,\mb{R})$ be defined by setting $\psi|_{M_0}=0$, $\psi|_{M_1\backslash M_0}=p$ and $\psi|_{M\backslash M_1}=h$. By this construction, one has $\b{\theta}_\psi\geq 0$. Since $h|_{\p M}=\frac{1}{2}\Psi(d',l')$, so $\bar{\eta}_\psi>0$. From Theorem \ref{thm1}, the proof is complete.
\end{proof}

\section{The positive mass theorem with arbitrary ends}\label{PMTAE}

In this section, we will consider the mass of an asymptotically end in a complete spin manifold without boundary.

 \begin{theorem}\label{quantification}
 Let $(\mc{E},g)$ be an $n$-dimensional asymptotically hyperbolic end such that $\mf{m}_\mathcal{E}(V_{u_0}^-)+\mf{m}_\mathcal{E}(V_{v_0}^+)<0$ for some $\n^\delta$-parallel $u_0,v_0\in C^\infty(\mb{B}^n,\mb{S}\mb{B}^n_\delta)$.	Then there exists a constant $R(\mc{E},g)$ such that the following holds: If $(M,g)$ is an $n$-dimensional Riemannian manifold without boundary that contains $(\mc{E},g)$ as an open subset and $\mc{U}=\mc{U}_R(\mc{E})\subseteq M$ denotes the open neighborhood of radius $R$ around $\mc{E}$ in $M$, then at least one of the following conditions must be violated:
 \begin{itemize}
  \item[(a)] $\o{\mc{U}}$ is (metrically) complete,
  \item[(b)] $\inf_{x\in\mc{U}}R_g(x)\geq -n(n-1)$,
  \item[(c)] $\mc{U}$ is spin.
\end{itemize}
 \end{theorem}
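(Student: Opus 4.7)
The proof goes by contradiction. Assume that for some $(M,g)$ containing $\mc{E}$ as an open subset, all three conditions (a), (b), (c) hold on $\mc{U}=\mc{U}_R(\mc{E})$; the plan is to show that $R$ then has to lie below an explicit constant $R_0=R_0(\mc{E},g)$. In the easy case $\mc{U}=M$, the whole manifold $M$ is itself a complete spin manifold without boundary with $R_g\geq -n(n-1)$, so Theorem~\ref{main1} says $\mf{m}_\mc{E}$ is causal future-directed or zero; in particular $\mf{m}_\mc{E}(V_{w_0}^\pm)\geq 0$ for every future-isotropic $V_{w_0}^\pm\in\mc{C}_b^\uparrow$, contradicting $\mf{m}_\mc{E}(V_{u_0}^-)+\mf{m}_\mc{E}(V_{v_0}^+)<0$. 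Hence $\p\mc{U}\neq\emptyset$, and combining (a) with Hopf--Rinow shows that $\p\mc{U}$ is compact in $M$. In particular, $\o{\mc{U}}$ is a complete asymptotically hyperbolic spin manifold with compact boundary and $R_g\geq -n(n-1)$.

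The strategy now is to produce a Callias potential $\psi\in C^\infty_c(\o{\mc{U}},\mb{R})$ satisfying $\b{\theta}_\psi\geq 0$ on $\o{\mc{U}}$ and $\b{\eta}_\psi\geq 0$ on $\p\mc{U}$; Theorem~\ref{thm1} then forces $\mf{m}_\mc{E}$ to be causal future-directed or zero, contradicting the hypothesis. The function $\psi$ is built in two pieces, in the spirit of Theorem~\ref{thm:main2}. On a collar $[0,l]\times \p\mc{U}\hookrightarrow\o{\mc{U}}$ of width $l<\tfrac{1}{n}\log(1+n/\lambda_0)$, Lemma~\ref{lemma:fun2} produces a function going from $\lambda_0$ on the inner boundary of the collar to $\mu=n/((n/\lambda_0+1)e^{-nl}-1)$ on $\p\mc{U}$. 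Since $\mu\to\infty$ as $l$ approaches $\tfrac{1}{n}\log(1+n/\lambda_0)$, the value $\mu$ can be taken as large as we please, and hence one can enforce
\[
\b{\eta}_\psi=\tfrac{n}{2(n-1)}H_g+\tfrac{n}{2}+\mu\geq 0
\]
on $\p\mc{U}$ regardless of the mean curvature $H_g$ inherited from $M$. Inside $\o{\mc{U}}$, $\psi$ is continued downward from $\lambda_0$ to $0$ on a compact set while preserving the ODE $\psi^2-|d\psi|+n\psi\geq 0$. The width needed for this inward transition, plus the collar width (bounded by $\tfrac{1}{n}\log(1+n/\lambda_0)$, independent of $M$), defines $R_0(\mc{E},g)$.

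With such a $\psi$, the Witten--Callias spinor $\Psi=(\Psi_{u_0,-},\Psi_{v_0,+})$ built in the lemma of Section~\ref{CM} solves $\mc{B}_\psi\Psi=0$ on $\o{\mc{U}}$ and has the prescribed asymptotics in $\mc{E}$. Substituting it in the mass formula~\eqref{eqn3} applied to $\o{\mc{U}}$ yields
\[
\tfrac{1}{4}\bigl(\mf{m}_\mc{E}(V_{u_0}^-)+\mf{m}_\mc{E}(V_{v_0}^+)\bigr)\geq \|\wt{\n}\Psi\|^2_{L^2(\o{\mc{U}},S)}+\int_{\o{\mc{U}}}\b{\theta}_\psi|\Psi|^2+\int_{\p\mc{U}}\b{\eta}_\psi|\Psi|^2\geq 0,
\]
contradicting the strict negativity in the hypothesis. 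The main obstacle is the inward part of the construction of $\psi$: Lemma~\ref{lemma:fun1} would handle this directly if the strict slack $R_g\geq(-1+\kappa)n(n-1)$ were available, but here only $R_g\geq -n(n-1)$ is assumed, so the ODE degenerates at the set $\{\psi=0\}$. Following the analogue of \cite{CZ2021} in the asymptotically flat setting, one overcomes this by exploiting the integrability $r(R_g+n(n-1))\in L^1(\mc{E})$ together with the model-hyperbolic geometry of the end to carry out the descent on a suitable asymptotic region of $\mc{E}$; this is precisely what fixes the constant $R_0(\mc{E},g)$.
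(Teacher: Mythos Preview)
Your proposal has a genuine gap at exactly the point you flag as ``the main obstacle.'' You aim to build $\psi\in C^\infty_c(\o{\mc{U}},\mb{R})$ with $\b{\theta}_\psi\geq 0$ pointwise, so that Theorem~\ref{thm1} applies. But with only $R_g\geq -n(n-1)$, the descent from a positive value $\lambda_0$ to $0$ subject to $\psi^2-|d\psi|+n\psi\geq 0$ is governed by the ODE $y'=y^2+ny$, whose solution $y(t)=n/\bigl((1+n/\lambda_0)e^{-nt}-1\bigr)$ reaches $0$ only as $t\to -\infty$. Thus no compactly supported $\psi$ can satisfy $\b{\theta}_\psi\geq 0$ everywhere, and the appeal to Theorem~\ref{thm1} fails. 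The integrability condition $r(R_g+n(n-1))\in L^1(\mc{E})$ gives no pointwise positivity of $R_g+n(n-1)$ on any prescribed region, so it cannot rescue the inequality; this is not how \cite{CZ2021} handles the analogous degeneracy either.

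The paper's proof takes a different route: it \emph{allows} $\theta_\psi$ to be negative on a fixed compact collar $K\subset M_*\subset\mc{E}$, taking $\psi=\lambda f$ there with a fixed cutoff $f$, so that $(\theta_\psi)_-\leq \lambda\,|df|$. The negative contribution $\int_K(\theta_\psi)_-|u|^2$ is then absorbed by the term $\|\wt{\n}u\|^2_{L^2(K)}$ via the weighted Poincar\'e inequality for $\wt{\n}$ (Proposition~\ref{pro:WPI}), which holds because $\wt{\n}$ has no nontrivial $L^2$-parallel sections. This works precisely when $\lambda<\lambda_1:=c_0/(2\|df\|_\infty)$ with $c_0=\operatorname{ess}\inf_K w_*$; the existence of the BVP solution then comes from Proposition~\ref{BVP1} rather than Proposition~\ref{BVP}. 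The constant $R(\mc{E},g)$ arises because the outer piece of $\psi$ (Lemma~\ref{lemma:fun2}) needs distance at least $\tfrac{1}{n}\log(1+n/\lambda)$ from $\p M_*$ to $\p M_\lambda$, and $\lambda$ is pinned above $\lambda_1$, which depends only on $(\mc{E},g)$. A secondary issue: invoking Theorem~\ref{main1} to dispose of the case $\mc{U}=M$ is circular, since in the paper Theorem~\ref{main1} is derived from the present theorem.
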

\begin{figure}[ht]
\centering
\includegraphics[width=0.6\textwidth]{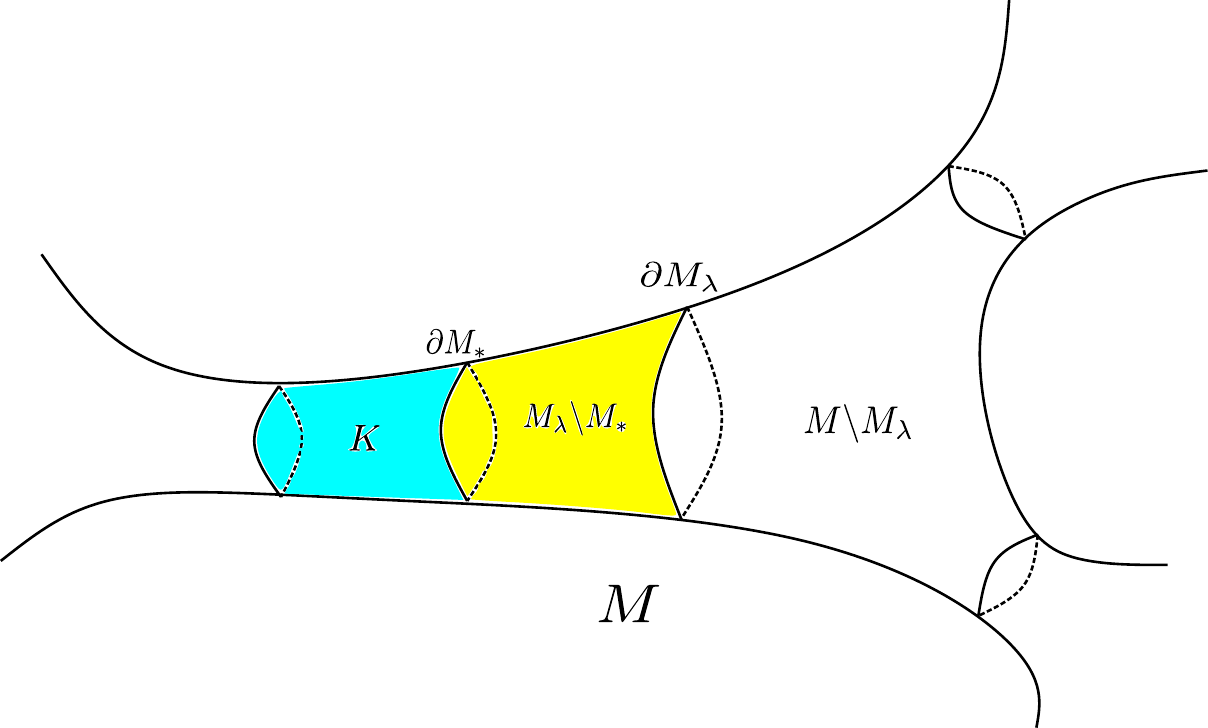}
\caption{The definitions of $K,M_*,M_\lambda$.}
\end{figure}
\begin{proof}
	Firstly, we assume that $\mc{E}$ satisfies (a)-(c), or we can take $R(\mc{E},g)=0$.
	Let $M_{*}\subseteq \mc{E}$ be a subset which is diffeomorphic to $\mb{H}^n_{r_0}$ for some $r_0>0$. We furthermore assume that some neighborhood of $M_*$ in $M$ still satisfies (a)-(c). Let $K\subset M_*$ be the closure of a collar neighborhood of $\p M_*$ in $M_*$. Let $f:M_*\to[0,1]$ be a smooth function such that $f=0$ on $M_*\backslash K$, and $f=1$ in a small neighborhood of $\p M_*$. 
Recall that $u=(u_1,u_2)$ with
$$ u_1=\Psi_{u_0,-}=\chi_{\mc{E}}\Xi_{u_0,-}+\phi_{u_0,-},\quad u_2=\Psi_{v_0,+}=\chi_{\mc{E}}\Xi_{v_0,+}+\phi_{v_0,+},$$
where $\chi_\mc{E}$ is a cut-off function with $\mr{supp}(\chi_\mc{E})\subseteq M_*\backslash K$.

Let $M_{\lambda}$ ($\supset M_*$) be a subset of $M$ satisfying (a)-(c) and 
$$\mr{dist}_g(\p M_*,\p M_{\lambda})\geq \frac{1}{n}\log(1+\frac{n}{\lambda}).$$
	Denote by $\lambda_0$ the infimum of such $\lambda$. We fix such a submanifold $M_{\lambda}$ for each $\lambda>\lambda_0$. 
	
	For each $\lambda>\lambda_0$, by Lemma \ref{lemma:fun2}, we can choose a smooth function $h_\lambda:M_{\lambda}\backslash M_*\to [\lambda,+\infty)$ such that $h_\lambda=\lambda$ in a small neighborhood of $\p M_*$ which satisfies
	\begin{equation*}
 h^2-|dh|+nh\geq 0
\end{equation*}
in all of $M_{\lambda}\backslash M_*$, and $h_\lambda|_{\p M_\lambda}+\frac{1}{2}H_{\lambda}+\frac{1}{2}(n-1)>0$. Let $\psi_\lambda:M_\lambda\to [0,\infty)$ be the smooth function defined by 
\begin{align*}
\begin{split}
  \psi_\lambda(x)= \begin{cases}
 	\lambda f(x)&x\in M_*, \\
 	h_\lambda(x)& x\in M_\lambda\backslash M_*.
 \end{cases}
 \end{split}
\end{align*}
Let $\mc{B}_{\psi_\lambda}$ be the associated Callias operator and $\eta_\lambda,\theta_\lambda$ are defined by \eqref{theta}, \eqref{eta} respectively. By construction, we obtain
\begin{itemize}
  \item[(i)] $\theta_\lambda\geq 0$ in $M_\lambda\backslash K$;
  \item[(ii)] $\theta_\lambda\geq -\lambda |df|$ in $K$;
  \item[(iii)] $\eta_\lambda\geq 0$.
\end{itemize}
In particular, the negative part of $\theta_\lambda$ is supported in $K\subset M_*$ and satisfies $(\theta_\lambda)_-\leq \lambda |df|$.
	By Proposition \ref{pro:WPI}, denote by $w_*$ the weight of the Poincar\'e inequality of $\wt{\n}$ over $M_*$, so $\operatorname{ess} \inf_{K} w_*=c_0>0$. 

	 If $\lambda_1:=\frac{c_0}{2\|df\|_\infty}>\lambda_0$, for any $\lambda\in(\lambda_0,\lambda_1)$, let $w_\lambda$ be the associated weight on $M_\lambda$, and set
	 \begin{equation*}
  w(x)= \begin{cases}
 w_*(x)	& x\in M_* \\
 	w_\lambda(x) & x\in M_\lambda\backslash M_*.
 \end{cases}
\end{equation*}
Then $w(x)$ is also a weight of $\wt{\n}$ over $M_\lambda$.
Moreover, on $K\subset M_*$, one has
 $$w-(\theta_\lambda)_->c_0-\lambda \|df\|_\infty=\frac{c_0}{2}>0,\quad \text{ a.e. }.$$
 Since $\mr{supp}(\chi_\mc{E})\cap \mr{supp}(\psi_\lambda)=\emptyset$, we have 
 \begin{align*}
 \mc{B}_{\psi_\lambda}(\chi_\mc{E}\Xi_{u_0,-},\chi_{\mc{E}}\Xi_{v_0,+})&=(\s{D}_\psi^+(\chi_\mc{E}\Xi_{v_0,+}),\s{D}^-_\psi(\chi_{\mc{E}}\Xi_{u_0,-}))\\
 &\in L^2(M_*,S;\chi)\subset L^2(M_\lambda,S;\chi).
 \end{align*}
	Using Proposition \ref{BVP1}, there exists a $v_\lambda\in L^2_1(M_\lambda,S;\chi)$ such that
	\begin{equation*}
  \mc{B}_{\psi_\lambda}(v_\lambda)=-\mc{B}_{\psi_\lambda}(\chi_\mc{E}\Xi_{u_0,-},\chi_{\mc{E}}\Xi_{v_0,+}).
\end{equation*}
Define $u_\lambda:=v_\lambda+(\chi_\mc{E}\Xi_{u_0,-},\chi_{\mc{E}}\Xi_{v_0,+})$. Then $\mc{B}_{\psi_\lambda}(u_\lambda)=0$ and $u_\lambda=v_\lambda$ on $(M_\lambda\backslash M_*)\cup K$.  By \eqref{eqn3}, one has
\begin{align}\label{eqn6.1}
\begin{split}
0&>  \frac{1}{4}\mf{m}_\mathcal{E}(V_{u_0}^-)+\frac{1}{4}\mf{m}_\mathcal{E}(V_{v_0}^+)\\
  &\geq \|\widetilde{\n}u_\lambda\|^2_{L^2(M_\lambda,S)}+\int_{M_\lambda} {\theta}_\psi |u_\lambda|^2 dV+\int_{\p M_\lambda}{\eta}_\psi|u_\lambda|^2 dS\\
  &\geq\|\widetilde{\n}u_\lambda\|^2_{L^2(K,S)}-\int_{K}(\theta_\lambda)_-|u_\lambda|^2 dV\\
  &=\|\wt{\n}v_\lambda\|^2_{L^2(K,S)}-\int_{K}(\theta_\lambda)_-|v_\lambda|^2 dV\\
  &\geq \int_{K}(w_*-(\theta_\lambda)_-)|v_\lambda|^2 dV\geq 0,
 \end{split}
\end{align}
which is a contradiction. Hence 
$$\lambda_0\geq \lambda_1=\frac{c_0}{2\|df\|_\infty}>0,$$
which means that $\lambda_0$ admits a strictly positive lower bounded which depends on the end $(\mc{E},g)$. The proof is complete.
\end{proof}
\begin{theorem}
Let $(M,g)$ be a complete connected $n$-dimensional Riemannian spin manifold without boundary such that $R_g\geq -n(n-1)$ and let $\mc{E}\subset M$ be an asymptotically hyperbolic end. Then the mass functional $\mf{m}_\mathcal{E}$ is timelike future-directed or zero. Moreover, it vanishes if and only if $(M,g)$ is isometric to the hyperbolic space $(\mb{H}^n,b)$. 
\end{theorem}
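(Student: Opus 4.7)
The plan is to deduce both assertions from the quantitative estimate of Theorem \ref{quantification} together with the mass inequality \eqref{eqn3}.

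\emph{Positivity.} I would argue by contradiction. Suppose $\mf{m}_\mathcal{E}$ is neither timelike future-directed nor zero. Since the causal character of $\mf{m}_\mathcal{E}$ on $\mc{N}_b$ is detected by its values on the future null cone $\mc{C}_b^\uparrow=\{V^\pm_{u_0}\}$, there exist $\n^\delta$-parallel spinors $u_0,v_0\in C^\infty(\mb{B}^n,\mb{S}\mb{B}^n_\delta)$ (with one possibly zero) such that
\[\mf{m}_\mathcal{E}(V_{u_0}^{-})+\mf{m}_\mathcal{E}(V_{v_0}^{+})<0.\]
Theorem \ref{quantification} then produces a radius $R(\mc{E},g)>0$ such that no open neighborhood $\mc{U}_R(\mc{E})$ of $\mc{E}$ of radius $R$ in any ambient manifold can simultaneously be spin, have complete closure, and satisfy $R_g\geq -n(n-1)$. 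But in our setting $M$ itself is complete, spin and everywhere satisfies $R_g\geq -n(n-1)$; the closure $\o{\mc{U}_R(\mc{E})}$ is closed in the complete manifold $M$ and hence metrically complete, while spin structure and the curvature bound are inherited by restriction. This contradicts Theorem \ref{quantification}, establishing the first assertion.

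\emph{Rigidity.} The ``only if'' direction is immediate, since $(\mb{H}^n,b)$ has vanishing mass functional. For the converse, assume $\mf{m}_\mathcal{E}\equiv 0$ on $\mc{N}_b$. Apply \eqref{eqn3} on $M$ with $\psi\equiv 0$; the boundary integral drops because $\p M=\emptyset$. For each pair $u_0,v_0$ of $\n^\delta$-parallel spinors on $\mb{B}^n$, the spinor $u=(\Psi_{u_0,-},\Psi_{v_0,+})$ constructed in Section \ref{CM} (whose existence on the boundaryless $M$ follows from the same Fredholm/weighted-Poincar\'e argument used to prove Theorem \ref{quantification} in the limit $\lambda\to 0$, using that $R_g+n(n-1)\geq 0$ globally) satisfies
\[0=\tfrac{1}{4}\bigl(\mf{m}_\mathcal{E}(V^-_{u_0})+\mf{m}_\mathcal{E}(V^+_{v_0})\bigr)\geq \|\wt{\n}u\|^2_{L^2(M,S)}+\int_M\tfrac{R_g+n(n-1)}{4}|u|^2\,dV\geq 0.\]
Equality on both sides forces $\wt{\n}u=0$, which splits into $\n^-\Psi_{u_0,-}=0$ and $\n^+\Psi_{v_0,+}=0$. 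Thus $\Psi_{u_0,\pm}$ are genuine imaginary Killing spinors on $(M,g)$; since they match the nonzero model spinors $\Xi_{u_0,\pm}$ near infinity in $\mc{E}$ and a parallel section of a connection that vanishes at one point vanishes everywhere, they are nowhere zero on $M$. Hence $R_g\equiv -n(n-1)$ on all of $M$. Letting $u_0$ vary over a basis of $\n^\delta$-parallel spinors on $\mb{B}^n$ produces the maximal-dimensional space of imaginary Killing spinors of each chirality on $(M,g)$. By the classical Baum classification of complete connected spin manifolds admitting a maximal space of imaginary Killing spinors of both chiralities (see e.g.\ \cite{AL2022}), $(M,g)$ must be isometric to $(\mb{H}^n,b)$.

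\emph{Main obstacle.} Two steps require care. First, one must verify that Proposition \ref{BVP} admits an adaptation to the boundaryless setting of Theorem \ref{main1} in the borderline case $\psi\equiv 0$, so that $\Psi_{u_0,\pm}$ genuinely exists in $L^2_1$ and is asymptotic to $\Xi_{u_0,\pm}$—this is essentially the construction carried out inside the proof of Theorem \ref{quantification} with the coercivity supplied by the strict inequality $R_g+n(n-1)>0$ on a bounded piece rather than by a Callias potential. Second, the closing identification with $(\mb{H}^n,b)$ relies on the classification theorem for maximal spaces of imaginary Killing spinors, which is the nontrivial rigidity input that turns the spinor identity into a geometric isometry.
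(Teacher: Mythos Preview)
Your positivity argument is essentially the paper's: both rely on the machinery behind Theorem~\ref{quantification}, and invoking that theorem as a black box is a legitimate shortcut.

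For rigidity, your route diverges from the paper and the justification has a genuine gap. You want to apply \eqref{eqn3} directly on $M$ with $\psi\equiv 0$, but \eqref{eqn3} is established only after one has solved for $\Psi_{u_0,\pm}$ via Proposition~\ref{BVP}, and that proposition assumes $(M,g)$ is asymptotically hyperbolic. Here $M$ may carry arbitrary other ends, so the existence of a global $\Psi_{u_0,\pm}$ must be argued separately. Your proposed fix---``coercivity supplied by the strict inequality $R_g+n(n-1)>0$ on a bounded piece''---is wrong on two counts: only $R_g+n(n-1)\geq 0$ is assumed, and in any case the coercivity at infinity of $\wt{D}$ does not come from scalar curvature but from the zeroth-order term $\tfrac{n}{2}\sigma$, via the identity $\|\wt{D}u\|^2=\|Du\|^2+\tfrac{n^2}{4}\|u\|^2$ for compactly supported $u$ (cf.\ the proof of Proposition~\ref{BVP}). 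The paper avoids this issue altogether: it never solves on all of $M$ with $\psi=0$. Instead it works on the asymptotically hyperbolic manifolds $M_\lambda$ with compact boundary for each $\lambda\in(0,\lambda_1)$, applies Proposition~\ref{BVP1} there, and then shows---using the weighted Poincar\'e inequality---that the solutions $v_\lambda$ coincide on $M_*$ and vanish on $M_\lambda\setminus M_*$. This allows a well-defined global section $u$ on $M$ with $\wt{\n}u=0$.

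A secondary point: you invoke the classification for the \emph{maximal} space of imaginary Killing spinors of both chiralities. This is overkill. The paper needs and uses only a \emph{single} nontrivial imaginary Killing spinor, which already forces $(M,g)\cong(\mb{H}^n,b)$ by Baum's theorem together with \cite[Theorem~3.1 and Lemma~4.11]{AD1998}.
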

\begin{proof}
Since $(M,g)$ is complete, spin and $R_g\geq -n(n-1)$, so $\lambda_0=0$. Hence $\lambda_1>\lambda_0$. For any $\lambda\in (\lambda_0,\lambda_1)$,  one has by \eqref{eqn6.1}
	\begin{align*}
\begin{split}
   \frac{1}{4}\mf{m}_\mathcal{E}(V_{u_0}^-)+\frac{1}{4}\mf{m}_\mathcal{E}(V_{v_0}^+)\geq \int_{K}(w_0-(\theta_\lambda)_-)|v_\lambda|^2 dV\geq 0
 \end{split}
\end{align*}
for any $\n^\delta$-parallel $u_0,v_0\in C^\infty(\mb{B}^n,\mb{S}\mb{B}^n_\delta)$. Hence $\mf{m}_\mathcal{E}$ is timelike future-directed or zero.

If $\mf{m}_\mathcal{E}$ is zero, then there exist $\n^\delta$-parallel $u_0,v_0\in C^\infty(\mb{B}^n,\mb{S}\mb{B}^n_\delta)$ such that $\mf{m}_\mathcal{E}(V_{u_0}^-)=\mf{m}_\mathcal{E}(V_{v_0}^+)=0$ and $(u_0,v_0)\neq 0$. Hence 
\begin{align}
\begin{split}
0&=	\frac{1}{4}\mf{m}_\mathcal{E}(V_{u_0}^-)+\frac{1}{4}\mf{m}_\mathcal{E}(V_{v_0}^+)\\
&\geq \frac{1}{2}\|\wt{\n}u_\lambda\|^2_{L^2(M_\lambda,S)}+ \frac{1}{2}\|\wt{\n}v_\lambda\|^2_{L^2(K,S)}-\int_{K}(\theta_\lambda)_-|v_\lambda|^2 dV                                        \\
&\geq \frac{1}{2}\|\wt{\n}u_\lambda\|^2_{L^2(M_\lambda,S)}+\frac{1}{2}\int_{K}(w_0-2(\theta_\lambda)_-)|v_\lambda|^2 dV.
\end{split}
\end{align}
For any $\lambda\in (0,\lambda_1)$, one has 
\begin{equation*}
\|\wt{\n}u_\lambda\|^2_{L^2(M_*,S)}\leq  \|\wt{\n}u_\lambda\|^2_{L^2(M_\lambda,S)}=0.
\end{equation*}
Hence, for any $\lambda,\lambda'\in (0,\lambda_1)$, one has
\begin{align*}
\begin{split}
  \wt{\n}(v_\lambda-v_{\lambda'})=\wt{\n}(u_\lambda-u_{\lambda'})=0
 \end{split}
\end{align*}
on $M_*$, which follows that $v_\lambda=v_{\lambda'}$ on $M_*$ by the weighted Poincar\'e inequality \eqref{WPI}. 

On the other hand, one has 
\begin{equation*}
 \| \wt{\n}v_\lambda\|^2_{L^2(M_\lambda\backslash M_*,S)}= \| \wt{\n}u_\lambda\|^2_{L^2(M_\lambda\backslash M_*,S)}\leq  \| \wt{\n}u_\lambda\|^2_{L^2(M_\lambda,S)}=0,
\end{equation*}
which follows that $v_\lambda\equiv 0$ on $M_\lambda\backslash M_*$ by the weighted Poincar\'e inequality \eqref{WPI}. For any $\lambda\in (0,\lambda_1)$, then the following function
\begin{equation*}
  v(x)= \begin{cases}
 	v_\lambda(x)& x\in M_*, \\
0& 	x\in M\backslash M_*.
 \end{cases}
\end{equation*}
is well-defined (independent of $\lambda$), and set
\begin{equation*}
  u=v+(\chi_\mc{E}\Xi_{u_0,-},\chi_{\mc{E}}\Xi_{v_0,+}).
\end{equation*}
Then $u(x)=0$ for $x\in M\backslash M_*$, and for any $x\in M_*$
\begin{equation*}
 ( \wt{\n}u)(x)=(\wt{\n}(v_\lambda+(\chi_\mc{E}\Xi_{u_0,-},\chi_{\mc{E}}\Xi_{v_0,+})))(x)=(\wt{\n}u_\lambda)(x)=0.
\end{equation*}
Since $u=(u_1,u_2)$ is non-trivial, it follows that $u_1$ or $u_2$ is a non-trivial imaginary Killing spinor on $M$.
From the work of H. Baum \cite{B1989} or \cite[Theorem 3.1 and Lemma 4.11]{AD1998}, $(M,g)$ is isometric to $(\mb{H}^n,b)$.

\end{proof}

\section{Appendix: Boundary value problems}\label{APP}      

In this section, we will solve a boundary value problem associated with a Callias operator.          

Recall that the Clifford bundle over $M$ is $S=\s{S}\oplus\s{S}$, the involution, Clifford multiplication and Dirac operator are 
\begin{equation*}
  \sigma=\begin{pmatrix}
  0&-i \\
  i&0 
\end{pmatrix},\quad c_{S}=\begin{pmatrix}
  0&c_{\s{S}} \\
  c_{\s{S}}&0 
\end{pmatrix},\quad   D=\begin{pmatrix}
  0&\s{D} \\
  \s{D}&0 
\end{pmatrix}.
\end{equation*}
The Callias operator is defined as 
\begin{equation*}
\mc{B}_\psi=\wt{D}+\psi\sigma=D+(\frac{n}{2}+\psi)\sigma.
\end{equation*}
\begin{definition}[{\cite[Definition 8.2]{BB2012}}]\label{coercive}
We say that $\mc{B}_\psi$ is coercive at infinity if there is a compact subset $K \subset M$ and a constant $C$ such that
$$
\|u\|_{L^{2}(M,S)} \leq C\|\mc{B}_\psi u\|_{L^{2}(M,S)},
$$
for all smooth sections $u$ with compact support in $M \backslash K$.	
\end{definition}

Following the argument in \cite[Proposition 4.7]{AL2020}, one has
\begin{proposition}\label{BVP}
If $(M,g)$ is a complete asymptotically hyperbolic spin manifold with compact boundary and let $\psi\in C^\infty_c(M,\mb{R})$. Suppose that $\b{\theta}_\psi\geq 0$ and $\bar{\eta}_\psi\geq 0$. Then for any $u\in C^\infty(M,S)$ such that $\mc{B}_\psi u\in L^2(M,S)$, there exists a unique $v\in L^2_1(M,S;\chi)$ solving the boundary value problem
\begin{align}\label{eqn:BVP}
\begin{split}
   \begin{cases}
 	\mc{B}_\psi v=-\mc{B}_\psi u&\text{in } M, \\
 	\chi(v|_{\p M})=v|_{\p M} &\text{on } \p M,
 \end{cases}
 \end{split}
\end{align}
where $\chi$ is the chirality operator \eqref{chirality}.
\end{proposition}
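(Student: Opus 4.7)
The plan is to solve \eqref{eqn:BVP} by a functional-analytic argument in the spirit of B\"ar--Ballmann \cite{BB2012} and \cite[Proposition 4.7]{AL2020}, using the spectral inequality \eqref{eqn0} of Section \ref{CM} as the essential a priori estimate. Let $\mc{H}$ be the completion of $C^\infty_c(M,S;\chi)$ in the graph norm $\|v\|_{\mc{H}}^2 := \|v\|_{L^2(M,S)}^2 + \|\mc{B}_\psi v\|_{L^2(M,S)}^2$. Setting $f := -\mc{B}_\psi u$, which lies in $L^2(M,S)$ by hypothesis, I would seek $v \in \mc{H}$ solving $\mc{B}_\psi v = f$. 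Because $\chi$ is a self-adjoint elliptic local boundary condition in the Lopatinsky--Shapiro sense, local elliptic regularity will automatically place any such $v$ into $L^2_1(M,S;\chi)$, so uniqueness in the claimed regularity class follows from uniqueness in $\mc{H}$.

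The argument rests on two a priori estimates. \emph{Coercivity at infinity} (Definition \ref{coercive}): since $\psi \in C^\infty_c(M,\mb{R})$, on the complement of a compact set $K$ containing $\mr{supp}(\psi)$ the operator $\mc{B}_\psi$ reduces to $\wt{D}$, and the spectral inequality \eqref{spectral2} applied to $v \in C^\infty_c(M \backslash K, S;\chi)$, together with the decay \eqref{decay rate} and the integrability $r(R_g + n(n-1)) \in L^1(\mc{E})$, yields a weighted Poincar\'e-type inequality for $\wt{\n}$ on each asymptotic end, equivalent to the spectral gap of $\wt{D} = D + \tfrac{n}{2}\sigma$ on $(\mb{H}^n,b)$; this gives $\|v\|_{L^2(M \backslash K)} \leq C\|\mc{B}_\psi v\|_{L^2(M \backslash K)}$. \emph{Triviality of the kernel:} if $v \in L^2_1(M,S;\chi)$ satisfies $\mc{B}_\psi v = 0$, then the boundary-at-infinity term in \eqref{eqn0} vanishes by the $L^2$ decay of $v$, and $\b{\theta}_\psi \geq 0$, $\b{\eta}_\psi \geq 0$ force $\wt{\mc{P}}v = 0$ on $M$; outside $\mr{supp}(\psi)$ one also has $\wt{D}v = 0$, hence $\wt{\n} v = 0$, so each component of $v$ is an imaginary Killing spinor on the asymptotic end, but $|v|^2 \in \mc{N}_b$ grows linearly at infinity and cannot lie in $L^1$ unless $v \equiv 0$; Aronszajn-type unique continuation for the first-order elliptic operator $\mc{B}_\psi$ then propagates $v \equiv 0$ to all of $M$. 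Combining these with the formal self-adjointness of $\mc{B}_\psi$ under the chirality boundary condition, the closed range theorem yields that $\mc{B}_\psi \colon L^2_1(M,S;\chi) \to L^2(M,S)$ is an isomorphism, giving both existence and uniqueness of $v$.

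The main obstacle is establishing coercivity at infinity. Since only $R_g \geq -n(n-1)$ is assumed (non-strict), the zeroth-order term $\b{\theta}_\psi = \tfrac{n}{n-1}\cdot\tfrac{R_g + n(n-1)}{4}$ on $M \backslash \mr{supp}(\psi)$ offers no pointwise positive lower bound on $|v|^2$, so the estimate must exploit the spectral gap of the hyperbolic Killing Dirac operator on $\mb{H}^n$ and transfer it to the asymptotic end via the uniform equivalence and decay conditions \eqref{decay rate}. A secondary technicality is justifying the vanishing of the tail integral $\int_{\p M_r \backslash \p M}\langle v, c(\nu^\flat)\wt{D}v + \wt{\n}_\nu v\rangle$ as $r \to \infty$ for elements of $\mc{H}$, which requires integration-by-parts in suitable weighted Sobolev norms together with the $L^2$ control on $v$ and $\mc{B}_\psi v$.
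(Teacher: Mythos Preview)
Your overall architecture---self-adjointness under the chirality condition, triviality of the kernel via \eqref{eqn0}, coercivity at infinity, and then a Fredholm/closed-range conclusion---is exactly the paper's strategy, and the argument would go through. But you have misidentified where the difficulty lies, and your proposed route for coercivity is far more delicate than necessary.

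The ``main obstacle'' you flag is not an obstacle at all. Outside $\mr{supp}(\psi)$ the operator is $\mc{B}_\psi=\wt{D}=D+\tfrac{n}{2}\sigma$, and for $u\in C^\infty_c(M\backslash K,S)$ (with $K$ compact containing $\mr{supp}(\psi)$ and a collar of $\p M$) a direct algebraic expansion gives
\[
\|\mc{B}_\psi u\|^2_{L^2}=\|Du\|^2_{L^2}+\tfrac{n^2}{4}\|u\|^2_{L^2}+\tfrac{n}{2}\int_{\p M}\langle u,c(\nu^\flat)\sigma u\rangle
=\|Du\|^2_{L^2}+\tfrac{n^2}{4}\|u\|^2_{L^2},
\]
since the cross terms $\langle Du,\sigma u\rangle+\langle\sigma u,Du\rangle$ integrate to a boundary term (using $\sigma D=-D\sigma$ and Green's formula) that vanishes because $u$ is supported away from $\p M$. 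This yields coercivity with constant $C=2/n$ in one line---no scalar curvature lower bound, no asymptotic decay, and no spectral-gap transfer from $\mb{H}^n$ are needed. Your worry that $R_g\geq -n(n-1)$ is non-strict is irrelevant here: the coercivity comes from the zeroth-order potential $\tfrac{n}{2}\sigma$, not from $\b\theta_\psi$.

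For the kernel, your use of Aronszajn unique continuation is correct but heavier than needed. From $\wt{\mc{P}}v=0$ and $\mc{B}_\psi v=0$ one gets $\wt\n_\xi v=\tfrac{\psi}{n}c(\xi^\flat)\sigma v$ for every $\xi$; this is a linear first-order ODE along curves, so once $v\equiv 0$ on the end (forced by the exponential growth of imaginary Killing spinors along geodesics versus $v\in L^2$), it propagates to all of $M$ without invoking Aronszajn.
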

\begin{proof}
	For any $u,v\in L^2_1({M,S})$, one has
	\begin{equation*}
  \int_M\left(\left\langle \mc{B}_\psi u,v\right\rangle-\left\langle u,\mc{B}_\psi v \right\rangle\right)=\int_{\p M}\left\langle u,c(\nu^\flat)v\right\rangle.
\end{equation*}
 Hence
\begin{align*}
\begin{split}
 \mc{B}_\psi:\mr{dom} \mc{B}_\psi\subset L^2_1(M,S;\chi)\to L^2(M,S)
 \end{split}
\end{align*}
is a self-adjoint operator with the chirality boundary condition. For any $u\in \mr{dom} \mc{B}_\psi$, by \eqref{eqn0} and noting that the first term in \eqref{eqn0} vanishes since $u\in L^2_1(M,S;\chi)$, one has
\begin{align*}
\begin{split}
  \|\mc{B}_\psi u\|^2_{L^2(M,S)}\geq \frac{n}{n-1}\|\widetilde{\mc{P}}u\|^2_{L^2(M,S)}+\int_M \b{\theta}_\psi |u|^2 dV+\int_{\p M}\b{\eta}_\psi|u|^2 dS.
 \end{split}
\end{align*}
If $u\in \ker\mc{B}_\psi$, then $\widetilde{\mc{P}}u=0$, i.e. for any vector field $\xi$, one has
$$\wt{\n}_\xi u+\frac{1}{n}c(\xi^\flat) \wt{D}u=\wt{\n}_\xi u-\frac{\psi}{n}c(\xi^\flat)\sigma u=0.$$
Since $\psi$ has compact support, so $\wt{\n}u=0$ on $M\backslash K$ for some compact subset $K$. Equivalently, $\n^- u_1=0$ and $\n^+u_2=0$ on $M\backslash K$. If $u_1$ or $u_2$ is not trivial, then they vary exponentially along geodesic segment, therefore can not lie in $\mr{dom}\mc{B}_\psi$. Hence $u=(u_1,u_2)\equiv 0$ on $M\backslash K$, which follows that $u\equiv 0$ on $M$ since $\wt{\n}_\xi u-\frac{\psi}{n}c(\xi^\flat)\sigma u=0$.
 Hence $\ker \mc{B}_\psi=\{0\}$.
 
Let $K$ be a compact subset of $M$ such that $\psi=0$ on $M\backslash K$. For any smooth section $u\in C^\infty(M,S)$ with compact support in $M\backslash K$, one has
 \begin{align*}
\begin{split}
  \|\mc{B}_\psi u\|^2_{L^2(M,S)}&=\int_M |\mc{B}_\psi u|^2=\int_M|(D+\frac{n}{2}\sigma)u|^2\\
  &=\int_M |Du|^2+\frac{n^2}{4}\int_M |u|^2+\frac{n}{2}\int_M(\left\langle D u,\sigma u\right\rangle+\left\langle \sigma u,Du\right\rangle)\\
  &=\|Du\|^2_{L^2(M,S)}+\frac{n^2}{4}\|u\|^2_{L^2(M,S)}+\frac{n}{2}\int_{\p M}\left\langle u,c(\nu^\flat)\sigma u\right\rangle\\
  &\geq \frac{n^2}{4}\|u\|^2_{L^2(M,S)},
 \end{split}
\end{align*}
where the last inequality follows from $u=0$ on $\p M$. Hence $\mc{B}_\psi$ is coercive at infinity by Definition \ref{coercive}.

 Using \cite[Proposition 4.19]{GN2014} and taking $\rho=0$ (which can be chosen since $\ker \mc{B}_\psi=0$), then one can solve a unique $v\in L^2_1(M,S;\chi)$ satisfying \eqref{eqn:BVP}.
 The proof is complete.


\end{proof}

\begin{definition}[{\cite[Definition 8.2]{BC2005}}]
 The covariant derivative $\nabla$ on $S$ over $M$ admits a weighted Poincar\'e inequality if there is a weight function $w \in L_{\text {loc }}^{1}(M)$ with $\operatorname{ess} \inf_{\Omega} w>0$ for all relatively compact $\Omega \Subset M$, such that
 \begin{equation}\label{WPI}
  \int_{M}|u|^{2} w d v_{M} \leq \int_{M}|\nabla u|^{2} d v_{M} \quad \forall u \in C_{\mathrm{c}}^{1}(M,S).
\end{equation}
\end{definition}
\begin{remark}
Note that the space $L^2_1(M,S)$ is the $\|\cdot\|_{L^2_1(M,S)}$-completion	 of $C^\infty_c(M,S)$, so the above weighted Poincar\'e inequality also holds for any $u\in L^2_1(M,S)$.
\end{remark}

\begin{proposition}\label{pro:WPI}
	The connection $\wt{\n}$ on $S$ over $M$ admits a weighted Poincar\'e inequality. 
\end{proposition}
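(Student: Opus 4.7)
The plan is to produce a uniform (constant) weight $w\equiv \tfrac{n}{4}$ that works on all of $M$, which a fortiori satisfies the essential-infimum requirement on every relatively compact $\Omega\Subset M$. The key structural observation is that with $k=g$ the twisted connection decomposes on $S=\s{S}\oplus\s{S}$ as
\begin{equation*}
\wt{\n} u = (\n^- u_1,\n^+ u_2), \qquad u=(u_1,u_2),
\end{equation*}
where $\n^\pm = \n \pm \tfrac{i}{2}c(\cdot^\flat)$ are exactly the imaginary Killing connections introduced earlier. It therefore suffices to bound $|\n^\pm u_j|^2$ from below in terms of $|u_j|^2$.

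The first step is the pointwise identity. Expanding $|\n^\pm_{e_i}u_j|^2$ and using that $c(e_i^\flat)$ is skew-Hermitian with $|c(e_i^\flat)u_j|^2=|u_j|^2$, together with the algebraic relation $\sum_i\langle c(e_i^\flat)u_j,\n_{e_i}u_j\rangle=-\langle u_j,\s{D}u_j\rangle$, I would obtain
\begin{equation*}
|\n^\pm u_j|^2 \;=\; |\n u_j|^2 + \tfrac{n}{4}|u_j|^2 \;\pm\; \mr{Im}\langle u_j,\s{D}u_j\rangle.
\end{equation*}
Summing over the two chirality components and integrating over $M$, the cross terms $\pm \int_M \mr{Im}\langle u_j,\s{D}u_j\rangle\,dV$ vanish for $u\in C^\infty_c(M,S)$ supported in the interior, by formal self-adjointness of $\s{D}$ (so $\int_M \langle u_j,\s{D}u_j\rangle\,dV$ is real). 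This yields
\begin{equation*}
\int_M |\wt{\n} u|^2\,dV \;\geq\; \int_M |\n u|^2\,dV + \tfrac{n}{4}\int_M |u|^2\,dV \;\geq\; \tfrac{n}{4}\int_M |u|^2\,dV,
\end{equation*}
so \eqref{WPI} holds with $w\equiv n/4>0$. The extension from $C^\infty_c$ to the $L^2_1$ closure is a routine density argument. One could equivalently arrive at the same conclusion via the pointwise Friedrich inequality $|\wt{\n} u|^2\geq \tfrac{1}{n}|\wt{D} u|^2$ together with the anti-commutation $D\sigma=-\sigma D$ (forced by the off-diagonal block form of both operators), which makes $\int_M \mr{Re}\langle Du,\sigma u\rangle\,dV$ vanish and gives $\int_M|\wt{D}u|^2\,dV\geq \tfrac{n^2}{4}\int_M|u|^2\,dV$.

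The only delicate point I anticipate is the treatment of boundary contributions when $\p M\neq\emptyset$: without imposing a boundary condition on $u$, the cross-term acquires an extra summand proportional to $\int_{\p M}\langle u_j,c(\nu^\flat)u_j\rangle\,dS$ from Green's formula. This is harmless for the applications in the paper, because Proposition \ref{pro:WPI} is invoked on $M_*$ and $M_\lambda$ at sections whose relevant compact set $K$ lies in the interior collar well away from the topological boundary; concretely, one first establishes the inequality on $C^\infty_c(M\setminus\p M,S)$ with $w\equiv n/4$, which already furnishes $\mr{ess\,inf}_K w = n/4>0$ as required in the proof of Theorem \ref{quantification}.
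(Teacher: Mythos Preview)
Your approach is genuinely different from the paper's. The paper invokes an abstract criterion (\cite[Proposition~8.3(2)]{BC2005}): a connection admits a weighted Poincar\'e inequality as soon as it has no nontrivial globally parallel sections (in the relevant $L^2$ class). It then observes that $\wt{\n}u=0$ means $\n^- u_1=\n^+ u_2=0$, and a nontrivial imaginary Killing spinor varies exponentially along geodesics, hence cannot lie in the admissible class. By contrast, you bypass the cited criterion entirely and produce an explicit \emph{constant} weight $w\equiv n/4$ by a direct integration-by-parts identity. This is more elementary and gives quantitative information the abstract argument does not.

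There is, however, a genuine gap at the boundary. The inequality \eqref{WPI} is required for all $u\in C^1_c(M,S)$, and on the manifolds to which the proposition is applied ($M_*$, $M_\lambda$) the boundary is nonempty and the test sections need not vanish there. Your cross term $\pm\,\mr{Im}\int_M\langle u_j,\s{D}u_j\rangle$ then picks up $\tfrac{1}{2i}\int_{\p M}\langle u_j,c(\nu^\flat)u_j\rangle$, which has no sign in general; so the constant weight $n/4$ does not verify the proposition as stated. Your final paragraph concedes this and retreats to ``sufficient for the applications,'' but that is not a proof of the proposition, and in fact the applications (e.g.\ the rigidity argument, where \eqref{WPI} is applied to $v_\lambda-v_{\lambda'}\in L^2_1$) do involve sections that are nonzero on the artificial boundaries.

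A clean way to rescue your route: run your ``alternative'' computation via $\wt{D}=D+\tfrac{n}{2}\sigma$ and the Friedrich inequality. Since $D$ and $\sigma$ anticommute, one gets for $u\in C^1_c(M,S)$
\[
\int_M|\wt{D}u|^2=\int_M|Du|^2+\tfrac{n^2}{4}\int_M|u|^2+\tfrac{n}{2}\int_{\p M}\langle u,\chi u\rangle,
\]
exactly as in the paper's proof of Proposition~\ref{BVP}. Under the chirality condition $\chi u=u$ (which is the boundary condition actually used throughout the paper, and to which $L^2_1(M,S;\chi)$ is tailored), the boundary term is $\tfrac{n}{2}\int_{\p M}|u|^2\geq 0$, and Friedrich then yields $\int_M|\wt{\n}u|^2\geq \tfrac{1}{n}\int_M|\wt{D}u|^2\geq \tfrac{n}{4}\int_M|u|^2$. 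So your constant weight $n/4$ is valid on $C^1_c(M,S;\chi)$ and hence on its completion $L^2_1(M,S;\chi)$, which is precisely the class used in Theorem~\ref{quantification} and Proposition~\ref{BVP1}. Stating it this way closes the gap without appealing to \cite{BC2005}.
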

\begin{proof}
From \cite[Proposition 8.3 (2)]{BC2005}, we only need to prove that there are no nontrivial globally parallel sections. If $\wt{\n}u=0$ on $M$, then $\n^-u_1=0$ and $\n^+u_2=0$ on $M$, which follows that $u\equiv 0$ since a non-trivial section varies exponentially along geodesic segment. 
\end{proof}

\begin{proposition}\label{BVP1}
Let $(M,g)$ be a complete connected asymptotically hyperbolic spin manifold with compact boundary and let $\psi\in C^\infty_c(M,\mb{R})$ be such that $\eta_\psi\geq 0$. Write $\theta_\psi=\theta_+-\theta_-$ with $\theta_\pm\geq 0$. Suppose that $\mr{supp}(\theta_-)\subseteq M_0$, which contains at least one asymptotically hyperbolic end of $M$, and $\theta_-< w$ a.e., where $w$ is a weight of Poincar\'e inequality of $\wt{\n}$. Then for any $u\in C^\infty(M,S)$ such that $\mc{B}_\psi u\in L^2(M,S)$, there exists a unique $v\in L^2_1(M,S;\chi)$ solving the boundary value problem
\begin{align*}
\begin{split}
   \begin{cases}
 	\mc{B}_\psi v=-\mc{B}_\psi u&\text{in } M, \\
 	\chi(v|_{\p M})=v|_{\p M} &\text{on } \p M.
 \end{cases}
 \end{split}
\end{align*}
\end{proposition}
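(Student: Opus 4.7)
The plan is to follow the architecture of the proof of Proposition \ref{BVP}, but with the pointwise positivity of $\b{\theta}_\psi$ replaced by the weighted Poincar\'e inequality for $\wt{\n}$ so that the negative part $\theta_-$ of the potential $\theta_\psi$ can be absorbed. As before, there are four steps: (i) establish self-adjointness of $\mc{B}_\psi$ on $L^2_1(M,S;\chi)$, (ii) show its kernel is trivial, (iii) show it is coercive at infinity in the sense of Definition \ref{coercive}, and (iv) invoke \cite[Proposition 4.19]{GN2014}.

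Self-adjointness on $L^2_1(M,S;\chi)$ is identical to the proof of Proposition \ref{BVP}: it follows from the Green's formula together with the fact that $\chi$ anticommutes with $c(\nu^\flat)$, so the boundary pairing $\int_{\p M}\langle u, c(\nu^\flat) v\rangle$ vanishes whenever $u$ and $v$ both satisfy the chirality boundary condition.

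For triviality of the kernel, I would use the spectral estimate \emph{without} invoking Friedrich's inequality, i.e. the $\wt{\n}$-form of the identity preceding \eqref{eqn1} (derived from \eqref{spectral2}). For $u \in \mr{dom}\,\mc{B}_\psi$ with $\mc{B}_\psi u = 0$, the boundary term at infinity vanishes because $u \in L^2_1(M,S;\chi)$, giving
\begin{equation*}
0 \geq \|\wt{\n} u\|_{L^2(M,S)}^2 + \int_M \theta_\psi\, |u|^2 + \int_{\p M} \eta_\psi\, |u|^2.
\end{equation*}
Using $\eta_\psi \geq 0$, $\theta_+ \geq 0$, and the weighted Poincar\'e inequality \eqref{WPI} applied to $u$, this yields $\int_M (w - \theta_-)\,|u|^2 \leq 0$. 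Since $w - \theta_- > 0$ almost everywhere by hypothesis, $u \equiv 0$.

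Coercivity at infinity follows from the compact support of $\psi$. For a compact $K \supseteq \p M \cup \mr{supp}(\psi)$ and any $u \in C^\infty_c(M \setminus K, S)$, the operator reduces to $\mc{B}_\psi u = \wt{D} u = Du + \tfrac{n}{2}\sigma u$, so the same integration by parts as in the proof of Proposition \ref{BVP} (with boundary terms on $\p M$ vanishing since $u$ has support disjoint from $\p M$) yields $\|\mc{B}_\psi u\|_{L^2}^2 \geq \tfrac{n^2}{4}\|u\|_{L^2}^2$. With self-adjointness, trivial kernel, and coercivity at infinity all established, \cite[Proposition 4.19]{GN2014} applied with obstruction parameter $\rho = 0$ produces a unique $v \in L^2_1(M,S;\chi)$ solving $\mc{B}_\psi v = -\mc{B}_\psi u$. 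The main difficulty is the kernel step: in Proposition \ref{BVP} parallel spinors could be eliminated by an exponential-growth argument directly, whereas here one must exploit the interplay between the weight $w$ supplied by Proposition \ref{pro:WPI} and the pointwise comparison $\theta_- < w$ to absorb the indefinite-sign potential.
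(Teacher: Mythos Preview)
Your proposal is correct and follows essentially the same route as the paper's proof: the paper also reduces to the $\wt{\n}$-form of the spectral estimate, applies the weighted Poincar\'e inequality to obtain $\|\mc{B}_\psi u\|^2_{L^2}\geq \int_M (w-\theta_-)|u|^2$ and hence $\ker\mc{B}_\psi=\{0\}$, then verifies coercivity at infinity exactly as in Proposition~\ref{BVP} before invoking \cite[Proposition 4.19]{GN2014}.
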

\begin{proof}
	The proof is similar to the proof of Proposition \ref{BVP}.
	For any $u\in L^2_1(M,S;\chi)$, one has
	\begin{align*}
\begin{split}
  \|\mc{B}_\psi u\|^2_{L^2(M,S)}&\geq  \|\widetilde{\n}u\|^2_{L^2(M,S)}+\int_M {\theta}_\psi |u|^2 dV+\int_{\p M}{\eta}_\psi|u|^2 dS\\
  &\geq\int_M (w-\theta_-)|u|^2 dV,
 \end{split}
\end{align*}
which follows that $\ker\mc{B}_\psi=\{0\}$. On the other hand, $\mc{B}_\psi$ is coercive at infinity by the same proof as Proposition \ref{BVP}, and the proof is complete.
\end{proof}

\bibliographystyle{alpha}
\bibliography{AH-manifolds}
\end{document}